\documentclass[a4paper, 11pt]{amsart}
\usepackage[english]{babel}
\usepackage[utf8]{inputenc}
\usepackage{fancyhdr}
\usepackage{xcolor}
\usepackage{comment}

\usepackage{amssymb}
\usepackage{amsmath} 
\usepackage{amsfonts}
\usepackage{amsthm} 
\usepackage{mathrsfs}

\pagecolor{white}

\usepackage{enumitem}
\usepackage{bbm}
\usepackage{todonotes}
\setlength{\marginparwidth}{2.1cm}

\setlength{\oddsidemargin}{0cm}
\setlength{\evensidemargin}{0cm}
\textwidth=455pt\oddsidemargin=0pt

\newtheorem{thm}{Theorem}[section]
\newtheorem{remark}[thm]{Remark}
\newtheorem{conj}[thm]{Conjecture}
\newtheorem{prop}[thm]{Proposition}

\newtheorem{lemma}[thm]{Lemma}
\newtheorem{corol}[thm]{Corollary}

\newtheorem*{prop*}{Proposition}
\newcommand{\R}{\mathbb R}
\newcommand{\N}{\mathbb N}
\newcommand{\Z}{\mathbb Z}
\newcommand{\C}{\mathbb C}
\newcommand{\eps}{\varepsilon}

\newcommand{\balpha}{\boldsymbol{\alpha}}
\newcommand{\bbeta}{\boldsymbol{\beta}}

\numberwithin{equation}{section}

\pagestyle{plain}
\title{A weighted one-level density of the non-trivial zeros of the Riemann zeta-function}
\author[Bettin]{Sandro Bettin}
\author[Fazzari]{Alessandro Fazzari}
\address{Universit\`a  di Genova,  Dipartimento di Matematica. Via Dodecaneso 35, 16146 Genova, Italy}
\email{bettin@dima.unige.it}
\address{American Institute of Mathematics, 600 East Brokaw Road, San Jose, CA 95112, US}
\email{fazzari@aimath.org}

\subjclass[2020]{Primary 11M06; Secondary 11M26.}

\DeclareMathOperator{\supp}{ supp}

\DeclareMathOperator{\meas}{ meas}

\newmuskip\pFqmuskip
\newcommand*\pFq[6][8]{%
  \begingroup 
  \pFqmuskip=#1mu\relax
  \mathchardef\normalcomma=\mathcode`,
  \mathcode`\,=\string"8000
  \begingroup\lccode`\~=`\,
  \lowercase{\endgroup\let~}\pFqcomma
  {}_{#2}F_{#3}{\left[\genfrac..{0pt}{}{#4}{#5};#6\right]}%
  \endgroup
}
\newcommand{\pFqcomma}{{\normalcomma}\mskip\pFqmuskip}

\begin{document}

\begin{abstract}
We compute the one-level density of the non-trivial zeros of the Riemann zeta-function weighted by $|\zeta(\frac12+it)|^{2k}$ for $k=1$ and, for test functions with Fourier support in $(-\frac12,\frac12)$, for $k=2$. 
As a consequence, for $k=1,2$, we deduce under the Riemann hypothesis that $T(\log T)^{1-k^2+o(1)}$ non-trivial zeros of $\zeta$, of imaginary parts up to $T$, are such that $\zeta$ attains a value of size $(\log T)^{k+o(1)}$ at a point which is within $O(1/\log T)$ from the zero. 


\end{abstract}
\maketitle

\section{Introduction}

Linear statistics of zeros of $L$-functions are a topic of central importance in number theory. It is expected that their behaviour can be modelled by analogous statistics in random matrix theory~\cite{Montgomery,KatzSarnak,KatzSarnak2,RudnickSarnak}. However this prediction, which would have far reaching consequences (see e.g.~\cite{ConreyIwaniec}), has been proven only in limited cases. 

In this paper, we consider the average behaviour of 
\begin{equation*}
N_f(t):=\sum_\gamma f\left(\frac{\gamma-t}{2\pi/\log T}\right), \qquad |t|\asymp T>1,
\end{equation*}
where the sum is over the non-trivial zeros $\rho=\frac12+i\gamma$ of the Riemann zeta-function\footnote{Notice we are not assuming the Riemann hypothesis, so $\gamma$ is not necessarily real.} and
 $f$ is a real-valued and even test function. Notice that $\frac{2\pi}{\log T}$ is the mean spacing of the imaginary part of the zeros of the Riemann zeta-function at height $|t|\asymp T$. 
 
The mean of $N_f(t)$ is called the one-level density of the non-trivial zeros of the Riemann zeta-function. In~\cite{MSConrey} it is shown that
\begin{equation}\label{c1ld}
\frac{1}{T}\int_T^{2T}N_f(t)\,dt=\int_{-\infty}^{+\infty}W_U(x)f(x)\,dx+O\left(\frac{1}{\log T}\right),
\end{equation}
where $W_U(x):=1$ for all $x\in\R$, provided that the the support of $\hat f$ is contained in $(-2,2)$. Moreover, the same result is known without this restriction either with a smooth average over $t$~\cite{HR} or under the Riemann hypothesis~\cite{MSConrey}.  
We remark that~\eqref{c1ld} is consistent with $\{\zeta(s+i\tau)\mid \tau\in[T,2T]\}$ being a unitary (continuous) family.

In~\cite{4.} the second named author considered the analogue of the classical one-level density for families of $L$-functions where each $L$-function is weighted according to the size of (a power of) its central value. This allows one to measure the effect that large central values have in the distribution of nearby zeros. 
In the ``continuous average case'' of the Riemann zeta-function, this weighted statistic corresponds to tilting the measure $dt$ in~\eqref{c1ld} by introducing a factor of $|\zeta(\frac12+it)|^{2k}$ for $k\in\Z_{\geq0}$. In~\cite{4.} it is conjectured that as $T\to\infty$ one has
\begin{equation}\label{w1ld}
\langle N_f\rangle_{k,T}:=\frac{1}{c_kT (\log T)^{k^2}}\int_T^{2T}N_f(t)|\zeta(\tfrac12+it)|^{2k}\, dt=\int_{-\infty}^{+\infty}W_U^{k}(x)f(x)\,dx+o(1),
\end{equation}
for any even $f$ with $\hat f\in\mathcal{C}^{\infty}_c(\R)$ and certain kernels $W^{k}_U(x)$, where $c_k$ is the conjectural constant for the $2k$-th moment of $\zeta$ (see~\cite{KeaS}). Moreover, this conjecture was proven under the ratio conjecture in the case $k\leq 2$, with kernels
\begin{equation*}
\begin{split}
W_U^0(x)&=W_U(x)=1,\qquad W_U^1(x)=1-\frac{\sin^2(\pi x)}{(\pi x)^2},\\
W_{U}^2(x)&=1-\frac{2+\cos(2\pi x)}{(\pi x)^2}+\frac{3\sin(2\pi x)}{(\pi x)^3}+\frac{3(\cos(2\pi x)-1)}{2(\pi x)^4}
\end{split}\end{equation*}
which are shown to coincide with the kernels appearing in the analogous statistics for the eigenvalues of unitary random matrices. 

We refer to~\cite{4.} for a more detailed discussion on these weighted averages and these kernels, as well as for analogous conjectures for other families. We mention also~\cite{Su}, where a similar phenomenon was observed when considering the $1$-level density for symmetric power $L$-functions weighted by the central value of the corresponding symmetric square $L$-function. Very recently, the analogous of~\eqref{w1ld} in the case of Dirichlet $L$-functions was proven for $k=1$  by Sugiyama and Suriajaya \cite{Su-Su}, under the additional hypothesis that $\supp(\hat f)\subseteq (-1/3,1/3)$. We also mention the works~\cite{1.,2.} by the second named author and~\cite{BELP} by Bui, Evans, Lester and Pratt which study weighted central limit theorems for central values of families of $L$-functions.

Writing $f_\alpha(\cdot):=f(\,\cdot -\alpha)$ for any $\alpha\in\R$ and any continuous fast decaying $f$, one can easily see that for $\alpha\ll1$ under the Riemann hypothesis (RH) we have
\begin{align}
\int_T^{2T}N_{f_{\alpha}}(t)|\zeta(\tfrac12+it)|^{2k}\, dt
&=\frac{2\pi}{\log T}\sum_{T\leq \Im(\rho)\leq 2T}\int_{\R}f(t)|\zeta(\rho -\tfrac{2\pi i}{\log T}(\alpha+t))|^{2k}\,dt+O(T^\eps)\notag\\
&=\frac{2\pi}{\log T}\int_{\R}f(t){M_k(-\alpha-t;T)}\,dt+O(T^\eps)\label{eqv}
\end{align}
where 
$$M_k(\alpha;T):=\sum_{T\leq\Im(\rho)\leq 2T}|\zeta(\rho+\tfrac{2\pi i \alpha}{\log T})|^{2k}.$$ 
Thus $\langle N_f\rangle_{k,T}$ can be seen as a version of $M_k(-\alpha;T)$ where the contribution of each (shifted) zero has been smoothed by a short average. In fact, under the Riemann hypothesis, the asymptotic formulas for $\langle N_f\rangle_{k,T}$, for all $f$ with $\hat f\in\mathcal{C}^{\infty}_c(\R)$, and for $M_k(\alpha;T)$, for all $\alpha\in \R$ are equivalent. Indeed, it's clear that the latter implies the former by integration, whereas for the opposite direction it suffices to take $f$ that approximates a Dirac delta function. We remark that this connection between moments over zeros and weighted $1$-level densities is only available for continuous families.

Gonek \cite{GonekInventiones} proved that $M_1(\alpha;T)\sim W_{U}^1(\alpha)T\log T$ under RH. By~\eqref{eqv} one then immediately obtains a conditional proof of~\eqref{w1ld} in the case $k=1$.

No asymptotic formula for $M_k$ is known for $k>1$, but Hughes conjectured an asymptotic formula for $M_k(\alpha;T)$ for all $k\in\N$ which is (conditionally) equivalent to~\eqref{w1ld}, as shown in Appendix~\ref{Appendix}.

As it is often the case, it is convenient to work with a smoothed version of $\langle N_f\rangle_{k,T}$, for which one naturally expects the following smoothed version of~\eqref{w1ld} to hold.
\begin{conj}\label{smtco}
Let $k\in\N$, $f$ even with $\hat f\in\mathcal{C}^{\infty}_c(\R)$. Let $\phi$ be a smooth function of compact support in $\R_{>0}$ and let $\tilde\phi(s):=\int_{\R}\phi(x)x^{s-1}\,dx$ be its Mellin transform. Then, as $T\to\infty$ we have
\begin{align}\label{c_sm}
\frac{1}{c_kT (\log T)^{k^2}} \int_{\R}N_f(t)|\zeta(\tfrac12+it)|^{2k}\phi\Big(\frac{t}{T}\Big)\, dt &=\tilde\phi(1)\int_{\R}{f}(x) W_U^k(x)\, dx+O((\log T)^{-1}).
\end{align}
\end{conj}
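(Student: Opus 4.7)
My plan is to reduce Conjecture~\ref{smtco} to an asymptotic formula for a smoothed variant of the moment-over-zeros $M_k(\beta;T)$, and then to prove the latter via contour integration together with known twisted moments of $\zeta$ on the critical line. As in~\eqref{eqv}, but using the smooth weight $\phi(t/T)$ in place of the sharp cutoff on $[T,2T]$, one obtains under RH
\begin{equation*}
\int_\R N_{f_\alpha}(t)|\zeta(\tfrac12+it)|^{2k}\phi(\tfrac{t}{T})\,dt = \frac{2\pi}{\log T}\int_\R f(u)\, M_k^\phi(-\alpha-u;T)\,du + O(T^\eps),
\end{equation*}
where $M_k^\phi(\beta;T):=\sum_\rho |\zeta(\rho+\tfrac{2\pi i \beta}{\log T})|^{2k}\phi(\tfrac{\Im \rho}{T})$, the sum being over all non-trivial zeros. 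It therefore suffices to establish the asymptotic
\begin{equation*}
M_k^\phi(\beta;T) = \tfrac{1}{2\pi}\,\tilde\phi(1)\,c_k\,W_U^k(\beta)\,T (\log T)^{k^2+1}\bigl(1+o(1)\bigr),
\end{equation*}
uniformly for $\beta$ in any compact subset of $\R$, since integrating this against $f$ and using the evenness of $W_U^k$ produces the right-hand side of~\eqref{c_sm}.

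To prove the asymptotic for $M_k^\phi$, I would write it as a contour integral of $\frac{\zeta'(s)}{\zeta(s)}\,\zeta^k(s+\tfrac{2\pi i\beta}{\log T})\,\zeta^k(1-s-\tfrac{2\pi i\beta}{\log T})$ along a rectangle of height $\asymp T$ enclosing the zeros, with a Mellin-type representation of $\phi$ handling the smooth truncation in the imaginary direction. Shifting the vertical sides of the contour to $\Re s=1+\eps$ and $\Re s=-\eps$, expanding $\zeta'/\zeta$ as a Dirichlet series on the right, and applying the functional equation $\zeta(s)=\chi(s)\zeta(1-s)$ on the left, reduces the problem to the evaluation of a twisted $2k$-th moment of $\zeta$ on the critical line. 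For $k=1$ the needed twisted second moment is classical (Ingham) and the computation recovers a smoothed version of Gonek's theorem $M_1(\beta;T)\sim W_U^1(\beta)T\log T$, yielding Conjecture~\ref{smtco} for $k=1$ with no constraint on $\supp(\hat f)$. For $k=2$, the twisted fourth moment formula (Hughes--Young, or one of its variants) is available only when the twist has size $\leq T^{1/2-\delta}$, which restricts us to $\supp(\hat f)\subset(-\tfrac12,\tfrac12)$.

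The main technical obstacle is the $k=2$ case. The twisted fourth moment formula produces a large family of main-term contributions coming from residues at the poles created by the various shifts: diagonal terms, and the four \emph{swap} pairings obtained by combining the two halves of the approximate functional equation with the two shifted $\zeta$ factors in $|\zeta(\tfrac12+it)|^4$. Extracting from this mixture precisely the kernel $W_U^2(\beta)$ and the constant $c_2=1/(2\pi^2)$, uniformly for $\beta=O(1)$ and with an error of size $O((\log T)^{-1})$, requires delicate residue calculus and combinatorial identities of hypergeometric type. For $k\geq 3$, no asymptotic for the twisted $2k$-th moment is known and the conjecture is out of reach by this method.
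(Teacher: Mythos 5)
Your route differs substantially from the paper's, and it contains two genuine gaps. First, the reduction to the moment-over-zeros $M_k^\phi(\beta;T)$ relies on the change of variables in~\eqref{eqv}, which only makes sense when the zeros $\rho$ lie on the critical line, i.e.\ under RH. But Theorem~\ref{mt2} proves Conjecture~\ref{smtco} unconditionally: the paper achieves this by applying the explicit formula (Lemma~\ref{explicit formula}) to write $N_f(t)=N_f^*(t)+S_f(t)$, a decomposition into a Gamma-factor piece and a prime-sum piece, each of which can then be integrated against $|\zeta(\tfrac12+it)|^{2k}\phi(t/T)$ via the twisted moment formulas of Propositions~\ref{TwSM} and~\ref{p4m}, with no hypothesis on the location of the zeros.

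Second, and more fundamentally, your method cannot deliver the case $k=2$. Establishing the claimed asymptotic for $M_2^\phi(\beta;T)$ for all $\beta$ is equivalent, after integrating against $f$, to Conjecture~\ref{smtco} with \emph{no} restriction on $\supp(\hat f)$ at all --- this is Hughes' conjecture for the fourth moment over zeros, which is beyond what the paper can prove. The restriction $\supp(\hat f)\subset(-\tfrac12,\tfrac12)$ cannot arise in your framework, because $\hat f$ enters only at the very last integration step, while the twist in your contour representation of $M_2^\phi$ comes from the Dirichlet series of $\zeta'/\zeta$, whose length is not controlled by $f$. Concretely, expanding $\zeta'/\zeta(s)=-\sum_n\Lambda(n)n^{-s}$ on $\Re s=1+\eps$, the $n$-sum only begins to decay for $n\gg T$, whereas Proposition~\ref{p4m} handles twisting Dirichlet polynomials of length $T^\vartheta$ only for $\vartheta<\tfrac12$; the range $T^{1/2}\ll n\ll T$ is then unaccounted for. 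The explicit-formula approach avoids this exactly because the prime-sum $S_f(t)$ already carries the factor $\hat f(\log n/\log T)$, so the support hypothesis truncates the associated Dirichlet polynomial to $n<T^{1/2-\eps}$ and Proposition~\ref{p4m} applies directly. This is the missing idea: it is the explicit formula, not an approximation of a Dirac delta, that puts $\hat f$ in control of the twist length.
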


In this note we shall prove Conjecture~\ref{smtco} for $k=1$ in a more precise version. We are also able to prove the conjecture in the case $k=2$, provided the support of $\hat f$ is sufficiently small. In both cases one could remove the additional smoothing under the assumption of the Riemann hypothesis.

\begin{thm}\label{mt2}
Conjecture~\ref{smtco} holds for $k=1$. It also holds for $k=2$ provided that the support of $\hat f$ is contained in $(-\frac12,\frac12)$.
In the case $k=1$, we also have the following more precise asymptotic formula
\begin{align}\label{9febbr.1}
\int_{\R}N_f(t)|\zeta(\tfrac12+it)|^{2}\phi\Big(\frac{t}{T}\Big)\, dt &=\frac{T}{\log T} \int_{\R}{f}(x)
\bigg(\psi(T)+\mathcal G\Big(\frac{2\pi i x}{\log T}, T\Big) \bigg)dx+O_{\eps}(T^{\frac12+\eps})
\end{align} 
as $T\to\infty$ and for any fixed $\eps>0$, where
\begin{equation*}
\begin{split}
\psi(T) &= \frac1T \int_{\mathbb R}\log(\tfrac{t}{2\pi})\big(\log\tfrac{t}{2\pi}+2\gamma\big)\phi\Big(\frac{t}{T}\Big)\,dt\\
&=\tilde\phi(1) \log^2(\tfrac{T}{2 \pi})+\big(2\gamma \tilde\phi(1)+2\tilde\phi'(1)\big)\log(\tfrac{T}{2 \pi})+\tilde\phi''(1)+2\gamma \phi'(1),
\\
\mathcal G(y,T)&=2 \tilde\phi(1)\bigg(\frac{\zeta'}{\zeta}(1+y)\log\frac{T}{2\pi}+\left(\frac{\zeta'}{\zeta}\right)'(1+y)+2\gamma\frac{\zeta'}{\zeta}(1+y)\bigg) \\ 
&\quad+2\tilde\phi'(1)\frac{\zeta'}{\zeta}(1+y)-2\tilde\phi(1-y)\left(\frac{T}{2\pi}\right)^{-y}\zeta(1-y)^2. 
\end{split}
\end{equation*}
\end{thm}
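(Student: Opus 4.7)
The plan is to reduce the computation to twisted moments of $\zeta$ via an explicit-formula expansion of $N_f(t)$. We will apply the Guinand--Weil explicit formula to the test function $h_t(u)=f\bigl((u-t)/(2\pi/\log T)\bigr)$---or equivalently, shift the contour of $\frac{\zeta'}{\zeta}(s)\,f\bigl(\frac{(s-\tfrac12-it)\log T}{2\pi i}\bigr)$ across the critical strip using the identity $\zeta'/\zeta(s)=\chi'/\chi(s)-\zeta'/\zeta(1-s)$. This will produce, for $|t|\asymp T$, a decomposition
$$N_f(t)=\mathcal A_T(t)-\frac{2}{\log T}\operatorname{Re}\sum_{n\geq 2}\frac{\Lambda(n)\,\hat f(\log n/\log T)}{\sqrt n}\,n^{-it}+O_f(T^{-1+\eps}),$$
where $\mathcal A_T(t)$ is the smooth ``archimedean'' term coming from the pole of $\zeta$ at $s=1$ and the digamma factor, satisfying $\mathcal A_T(t)=\bigl(\log(|t|/(2\pi))/\log T\bigr)\int_\R f(x)\,dx+O_f(1/|t|)$ at leading order.

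For $k=1$, after multiplying by $|\zeta(\tfrac12+it)|^2\phi(t/T)$ and integrating, the archimedean piece will be handled by Ingham's unconditional asymptotic
$$\int_\R \log(t/(2\pi))\,|\zeta(\tfrac12+it)|^2\phi(t/T)\,dt = T\psi(T)+O_\eps(T^{1/2+\eps}),$$
where $T\psi(T)$ matches, after the change of variable $t=Tu$, the integral representation of $\psi(T)$ in~\eqref{9febbr.1}. The prime-sum piece requires the twisted second moment
$$\mathcal I_n(T):=\int_\R n^{-it}|\zeta(\tfrac12+it)|^2\phi(t/T)\,dt,$$
which we will evaluate by the Balasubramanian--Conrey--Heath-Brown method, or equivalently by a residue calculation applied to the shifted second moment $\int_\R\zeta(\tfrac12+\alpha+it)\zeta(\tfrac12+\beta-it)\phi(t/T)\,dt$ followed by differentiation in the shifts. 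This should produce an asymptotic with a ``diagonal'' piece contributing to the $\zeta'/\zeta$ terms of $\mathcal G$ and a ``dual'' piece (arising from the functional equation, of the form $\propto (T/(2\pi))^{-y}\tilde\phi(1-y)$) contributing to the $\zeta(1-y)^2$ term. After substituting $\hat f(\log n/\log T)=\int_\R f(x)\,n^{-2\pi ix/\log T}\,dx$ and using the Dirichlet-series identity $\sum_{n\geq 2}\Lambda(n)/n^{1+y}=-\zeta'/\zeta(1+y)$ together with its $y$-derivative, the diagonal pieces of $\mathcal I_n$ will assemble into the first two lines of $\mathcal G$ at $y=2\pi ix/\log T$, and the dual piece into the last term; the overall $O_\eps(T^{1/2+\eps})$ error will be uniform in $n$ thanks to the $C^\infty_c$-smoothing of $\phi$.

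For $k=2$, the same explicit-formula step will reduce the problem to evaluating
$$\int_\R n^{-it}|\zeta(\tfrac12+it)|^4\phi(t/T)\,dt$$
in the range $n\leq T^{1/2}$, which is precisely the range of effective summation forced by the hypothesis $\operatorname{supp}\hat f\subset(-\tfrac12,\tfrac12)$. An unconditional asymptotic for the twisted fourth moment with power-saving error in this range follows from the work of Hughes--Young and subsequent refinements; we will sum this asymptotic against $\Lambda(n)\hat f(\log n/\log T)/\sqrt n$ and match the emerging $\zeta$-combinations with the kernel $W_U^2$. The main obstacle will be this last matching step: the twisted fourth moment contains off-diagonal contributions governed by shifted convolutions of the divisor function $d_2$, and showing that these reorganise into exactly the $\zeta$-combination appearing in~\cite{4.} (derived there under the ratios conjecture) will require a delicate combinatorial reduction. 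Controlling the off-diagonal shifted convolutions uniformly in $n$ is precisely what forces the $1/2$ support constraint on $\hat f$.
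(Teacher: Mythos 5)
Your high-level strategy matches the paper's: apply the Guinand--Weil explicit formula to split $N_f(t)$ into an archimedean piece and a prime sum, then feed the resulting Dirichlet polynomial into a twisted second (resp.\ fourth) moment for $k=1$ (resp.\ $k=2$). However, there are two substantive gaps.

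For $k=2$, you claim that the needed twisted fourth moment asymptotic in the range $n\leq T^{1/2}$ ``follows from the work of Hughes--Young and subsequent refinements.'' It does not. The support condition $\supp\hat f\subset(-\tfrac12,\tfrac12)$ forces one to average $|\zeta(\tfrac12+it)|^4$ against a Dirichlet polynomial of length up to $T^{1/2-\eps}$, and prior to this paper the best available was length $T^{1/3-\eps}$ (what one gets from a careful bookkeeping in Bettin--Bui--Li--Radziwi\l{}\l, or even less from Hughes--Young). Closing that gap from $T^{1/3}$ to $T^{1/2}$ is precisely Proposition~\ref{p4m} of the paper, which in turn rests on Proposition~\ref{qdpt}, a refinement of the quadratic divisor problem obtained by removing the range restrictions in the Watt-type large-sieve inequality (Lemma~\ref{Watt}) via Deshouillers--Iwaniec. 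This is the main new input, not a bookkeeping step. Relatedly, you identify the ``matching step'' with $W_U^2$ as the main obstacle; in the paper that step is a mechanical limit computation carried out symbolically, whereas the real work is proving the twisted fourth moment at the $T^{1/2}$ threshold.

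For $k=1$, there is a subtler issue. Since the conjecture is asserted for $\hat f$ of arbitrary compact support, the prime sum $S_f(t)$ contains primes up to $T^a$ for arbitrary fixed $a>0$, so you must evaluate $\int_\R n^{-it}|\zeta(\tfrac12+it)|^2\phi(t/T)\,dt$ uniformly for $n$ ranging through and well beyond the critical scale $n\asymp T$. The standard BCHB asymptotic, which you invoke, is stated for $n=O(T^{1-\delta})$; and the regime $n\geq T^{1+\delta}$ is negligible. The new content of Proposition~\ref{TwSM} is precisely a single uniform formula covering the transition range $n\asymp T^{1+o(1)}$, which is what produces the exact lower-order structure (the $\tilde\phi(1-y)(T/2\pi)^{-y}\zeta(1-y)^2$ term in $\mathcal G$) in~\eqref{9febbr.1}. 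Your sketch treats the twisted second moment as known; it would give the leading term but not the stated secondary terms with the claimed $O_\eps(T^{1/2+\eps})$ error without this extension.
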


It would be possible to isolate lower order terms also in the case $k=2$, but we have chosen not to do so for simplicity as the resulting expression would be rather long. In any case, we remark that by the discussion above, the case $k=2$ of the theorem can be seen as a smoothed version of an asymptotic formula for $M_2(\alpha;T)$.

Notice that, under the assumption of the Riemann hypothesis, only the $t$s such that $|\zeta(\tfrac12+it)|\asymp (\log T)^{k+o(1)}$, which form a thin subset of size $ T (\log T)^{-k^2+o(1)}$, contribute significantly to the left hand sides of~\eqref{w1ld} and~\eqref{c_sm}
(see~\cite{Sou} or Section~5 below). 
Thus, results on the weighted one-level density discussed above can be used to deduce results on large values of $\zeta$ near its zeros.

In particular, under RH, a weaker form of Conjecture~\ref{smtco} implies that there are $T(\log T)^{1-k^2+o(1)}$ non-trivial zeros $\frac12+i\gamma$ of the Riemann zeta-function for which $\zeta$ has a ``large value''  of size $(\log T)^{k+o(1)}$ nearby. 
More specifically, we have the following.

\begin{thm}\label{applt}
Assume the Riemann hypothesis and assume that Conjecture~\ref{smtco} holds for some $k\in\N$ and all even functions $f$ with Fourier support in $(-\delta,\delta)$ for some $\delta>0$. Let $Z(T):=\{\gamma\in[T,2T]\mid \zeta(\frac12+i\gamma)=0\}$ and for $U>0$ let
$$Z_k(T;U):=\Big\{\gamma \in Z(T)\mid  \Big|\max_{|u|\leq\frac1{\log T}}\log|\zeta(\tfrac12+i\gamma+iu)|-k\log\log T\Big|<U\Big\}.
$$
Then for any $U\geq \frac{5k^3\log\log T}{\sqrt{\log\log\log T}}$ we have
\begin{align}\label{bzk}
 e^{- 2k U}\ll \frac{\# Z_k(T;U)}{T(\log T)^{1-k^2}}\ll   e^{2kU }.
 \end{align}
\end{thm}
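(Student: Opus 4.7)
The plan is to reduce~\eqref{bzk} to an asymptotic for the moment sum $S_k(T):=\sum_{\gamma\in Z(T)}\phi(\gamma/T)M_\gamma^{2k}$, where $M_\gamma:=\max_{|u|\leq 1/\log T}|\zeta(\tfrac12+i\gamma+iu)|$ and $\phi$ is a non-negative bump supported in $[1,2]$ localising to $\gamma\in[T,2T]$. Via the identity~\eqref{eqv} and the Taylor expansion $\zeta(\rho+iv)=v\zeta'(\rho)+O(v^2(\log T)^2)$ in the window $|v|\ll 1/\log T$ around $\rho=\tfrac12+i\gamma$, the inner integral $I_\gamma(f):=\int f(u)|\zeta(\tfrac12+i\gamma-2\pi iu/\log T)|^{2k}\,du$ appearing in Conjecture~\ref{smtco} is comparable to $M_\gamma^{2k}$ for any even non-negative $f$ with $\hat f\subset(-\delta,\delta)$, $\int f\,W_U^k>0$ and $\int f(u)u^{2k}\,du>0$. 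Hence the Conjecture translates to the two-sided asymptotic $S_k(T)\asymp T(\log T)^{k^2+1}$. The upper bound in~\eqref{bzk} follows immediately, since every $\gamma\in Z_k(T;U)$ has $M_\gamma\geq (\log T)^ke^{-U}$, so
\begin{equation*}
\#Z_k(T;U)\cdot (\log T)^{2k^2}e^{-2kU}\leq S_k(T)\ll T(\log T)^{k^2+1},
\end{equation*}
giving $\#Z_k(T;U)\ll T(\log T)^{1-k^2}e^{2kU}$. (This direction in fact only uses the upper bound on $S_k(T)$, which holds under RH alone via Harper's sharp moment bound, independently of the Conjecture.)

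For the lower bound, split $S_k(T)=\sum_{Z_k(T;U)}+\sum_{\mathrm{outside}}$ and estimate the complement by Chebyshev. On the range $\{M_\gamma<(\log T)^ke^{-U}\}$ use exponent $k-\eta$, and on $\{M_\gamma>(\log T)^ke^U\}$ use $k+\eta$, for a parameter $\eta>0$ to be chosen. Combined with the fractional moment bound $\sum_\gamma\phi(\gamma/T)M_\gamma^{2k''}\ll T(\log T)^{k''^2+1}$, valid for all real $k''>0$ under RH as a consequence of Harper's estimate $\int_T^{2T}|\zeta(\tfrac12+it)|^{2k''}\,dt\ll T(\log T)^{k''^2}$ together with the comparison $M_\gamma^{2k''}\asymp \log T\cdot\int_{|u|\leq 1/\log T}|\zeta(\tfrac12+i\gamma+iu)|^{2k''}\,du$ and the disjointness of these intervals, each of the two ranges contributes $\ll T(\log T)^{k^2+\eta^2+1}e^{-2\eta U}$. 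Optimising at $\eta=U/\log\log T$ gives an outer contribution $\ll T(\log T)^{k^2+1}\exp(-U^2/\log\log T)$, which under the hypothesis $U\geq 5k^3\log\log T/\sqrt{\log\log\log T}$ is $o(T(\log T)^{k^2+1})$ (the cubic factor in $k$ absorbs the implicit $k$-dependence in the moment bound for $k''$ near $k$). Consequently $\sum_{Z_k(T;U)}\phi(\gamma/T)M_\gamma^{2k}\gg T(\log T)^{k^2+1}$, and since $M_\gamma^{2k}\leq(\log T)^{2k^2}e^{2kU}$ on $Z_k(T;U)$, we conclude $\#Z_k(T;U)\gg T(\log T)^{1-k^2}e^{-2kU}$.

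The main obstacle is justifying the pointwise comparison $I_\gamma(f)\asymp M_\gamma^{2k}$: although Taylor expansion around $\rho$ handles the region $|u|\ll 1$, the band-limited $f$ has only polynomial decay at infinity, so the contribution from $|u|\gg 1$ (where $|\zeta|$ can be large at points far from $\rho$, for example at a nearby peak) must be controlled separately. A workable strategy is to take $f$ as a high convolution power of a small bump to obtain sufficient decay via Paley--Wiener, and to bound the large-$|u|$ contribution using a mean-square estimate for $|\zeta|$ on mesoscopic intervals. An alternative is to bypass $M_\gamma$ entirely and work directly with the asymptotic for $M_k(\alpha;T)$ implied by the Conjecture, extracting the behaviour of $|\zeta|^{2k}$ near zeros via a quantitative analysis as $\alpha\to 0$.
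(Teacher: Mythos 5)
Your proposal shares the paper's broad contour (Chebyshev/Rankin for the upper bound, moment restriction for the lower bound), but two of the reductions you lean on fail as stated, and the third, which you flag yourself, is not resolved. First, the claim that $S_k(T)\ll T(\log T)^{k^2+1}$ follows from RH and Harper's moment bound alone, independently of the Conjecture, cannot be right: $S_k(T)$ is a sum over the zeros $\gamma$, while Harper bounds the $t$-integral $\int_T^{2T}|\zeta|^{2k}\,dt$. Passing from one to the other requires integrating over the overlapping windows $\gamma+I$, which leads to $\int N_\chi(t)|\zeta|^{2k}\,dt$; preventing zeros from clustering near large values of $|\zeta|$ is exactly the content of the Conjecture, and without it, Cauchy--Schwarz with the $4k$-th moment loses a factor $(\log T)^{k^2}$. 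Second, the comparison $M_\gamma^{2k''}\asymp\log T\int_{|u|\leq 1/\log T}|\zeta(\frac12+i\gamma+iu)|^{2k''}\,du$ is false in the $\ll$ direction — the one you use — since a maximum over a short interval can vastly exceed $\log T$ times the integral, a problem aggravated near a zero where the integral is depressed. The paper replaces both devices by a single correct one: the ABBRS inequality bounding $M_2(t)^{2k}$ by the endpoint values plus $\int|k\,\zeta^{2k-1}\zeta'|$, with $\zeta'$ controlled by Cauchy's integral formula on a circle of radius $2/\log T$; this is what makes Lemma~\ref{fl} work under the Conjecture, with no recourse to fractional moments for $k''$ near $k$ or to the hand-wave about the $k^3$ factor absorbing $k$-dependence.

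The third gap — your pointwise $I_\gamma(f)\asymp M_\gamma^{2k}$ — is indeed the main obstacle and is not closed by the fixes you sketch. Band-limited $f$ has only polynomial decay (this is forced by the Paley--Wiener theorem, not a removable inconvenience), and $I_\gamma(f)$ can be dominated by contributions from $|u|\gg1$ near a distant large value, so $I_\gamma(f)\ll M_\gamma^{2k}$, which your lower-bound step needs, fails pointwise. The paper bypasses $S_k(T)$ and $M_\gamma$ entirely in the lower bound: it works directly with $\int N_\chi(t)|\zeta|^{2k}\phi(t/T)\,dt$, restricts $t$ to the level set $J$ where $|\zeta(\frac12+it)|\in(e^{kL-U},e^{kL+U})$, and shows the complement is negligible by combining Soundararajan's measure bound $\meas(J_u)\ll TL^{-1/2}e^{-(u+kL)^2/L+19k^3L/\log L}$ with a concentration estimate for $N_\chi(t)$ (Lemma~\ref{bfn}, proved by moments of a short Dirichlet polynomial) — a step with no analogue in your outline. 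So while your skeleton points in the right direction, each load-bearing estimate needs to be replaced by the paper's techniques before the argument closes.
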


By Theorem~\ref{mt2} we immediately deduce the following corollary.

\begin{corol}\label{appltc}
Assume the Riemann hypothesis. Then~\eqref{bzk} holds for $k=1,2$.
\end{corol}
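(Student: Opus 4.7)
The plan is to observe that Corollary~\ref{appltc} is an immediate consequence of Theorems~\ref{mt2} and~\ref{applt} combined. I would simply verify that, for $k=1,2$, the conclusion of Theorem~\ref{mt2} supplies precisely the hypothesis needed to invoke Theorem~\ref{applt}, which already contains the assumption of the Riemann hypothesis present in the corollary.

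For $k=1$, Theorem~\ref{mt2} establishes Conjecture~\ref{smtco} for every even $f$ with $\hat f\in\mathcal{C}^{\infty}_c(\R)$, with no restriction on $\supp(\hat f)$. In particular, for any fixed $\delta>0$ (say $\delta=1$), the hypothesis of Theorem~\ref{applt} is met, and its conclusion~\eqref{bzk} for $k=1$ follows at once. For $k=2$, Theorem~\ref{mt2} yields Conjecture~\ref{smtco} for every even $f$ with $\supp(\hat f)\subseteq(-\tfrac12,\tfrac12)$; taking $\delta=\tfrac12$, this is precisely what Theorem~\ref{applt} requires, and the same invocation delivers~\eqref{bzk} for $k=2$.

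There is no genuine obstacle in the argument: all the substantive work has been absorbed into the two theorems, and the only thing to check is that the support condition for Conjecture~\ref{smtco} verified in Theorem~\ref{mt2} is at least as strong as the ``some $\delta>0$'' tolerated by Theorem~\ref{applt}, which is transparent in both cases. In particular, no additional computation, no further analysis of the large values of $\zeta$, and no new input beyond RH is needed.
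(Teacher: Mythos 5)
Your proposal is correct and coincides with the paper's own (one-line) deduction: the corollary is obtained precisely by feeding the cases $k=1$ (any compactly supported $\hat f$) and $k=2$ (with $\supp\hat f\subseteq(-\tfrac12,\tfrac12)$) of Theorem~\ref{mt2} into Theorem~\ref{applt}, whose hypothesis only demands Conjecture~\ref{smtco} for some $\delta>0$. Nothing further is needed.
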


In fact, in the case of $k=1$, one can use the recent work~\cite{AB} of Arguin and Bailey instead of~\cite{Sou}. In particular, under RH we have that~\eqref{bzk} holds for any $U\geq \sqrt{\log\log T\log\log\log T}$ (see Remark~\ref{dad}).

\medskip

We remark that by~\cite{Najnudel, ABBRS} we have $\max_{|u-t|\leq1 }\log|\zeta(\tfrac12+iu)|=(1+o(1))\log \log T$ for almost all $t\in[T,2T]$. Also, by the Riemann-Von Mangoldt formula we have $Z(T)\sim \frac T{2\pi }\log T$.
 Since $\zeta$ typically changes values at a scale of about $(\log T)^{-1+o(1)}$, then the case $k=1$ of~\eqref{bzk} says that an (approximate) maximum of $\log|\zeta(\frac12+iu)|$ in $[t-1,t+1]$ is taken next to a zero in $Z(T)$ at a rate which is of roughly the same order of magnitude as when the sets of zeros $Z(T)$ is replaced by a set of $\frac T{2\pi }\log T$ numbers taken uniformly at random in $[T,2T]$. Notice that this is in contrast with the naive expectation of the zeros of zeta having a damping effect on nearby values. Similar considerations could be made for larger values of $k$.

Theorem~\ref{applt} leaves open the problem of determining an asymptotic formula for $\# Z_k(T; U)$ and $U=\sqrt{\log\log T\log\log\log T}$, say. In fact, for any $v>0$ one could more generally consider the set $Z_k(T,v;U)$ where the maximum is taken over $|u|\leq\frac v{\log T}$. The asymptotic expansion of $W_U^{k}(x)$ at $x=0$ given in~\cite{4.}  suggests that $\# Z_k(T,v;U)$ decrease proportionally to $v^{2k+1}$ as $v$ goes to zero sufficiently slowly as $T\to\infty$. It would be nice to be able to understand whether this is indeed the case.

\medskip

The proof of Theorems~\ref{mt2} is based on the following propositions on the second and fourth twisted moments of $\zeta$. Since they could be of independent interest, we state them here.

\begin{prop}\label{TwSM}
Let $\phi:\R\to\R$ be of compact support in $\R_{>0}$.
Let $A(s)=\sum_{n\leq N}a_nn^{-s}$ be a Dirichlet polynomial of length $N\leq T^\vartheta$ for any fixed $\vartheta>0$ and such that $a_n\ll_\varepsilon n^\varepsilon$ for all $\varepsilon>0$. Then, for any $\alpha,\beta\ll\frac1{\log T}$ and any $\eps>0$ as $T\to\infty$ we have
\begin{equation}
\begin{split}
&\int_{\R}A(\tfrac12+it)\zeta(\tfrac12+it+\alpha)\zeta(\tfrac12-it+\beta)\phi\Big(\frac{t}{T}\Big)\,dt=T\sum_{n\leq N}\frac{a_n}{n^{1+\alpha}}F_{\frac{\alpha+\beta}2}\Big(\frac{T}{2\pi n}\Big)+O_\eps(T^{\frac12+\eps}),
\end{split}\label{TwFormula}
\end{equation}
where, writing $\int_{(c)}\cdot \,dw:=\int_{c-i\infty}^{c+i\infty}\cdot\, dw$,  we have
\begin{equation*}
 F_{\gamma}(x):=\frac{1}{2\pi i}\int_{(2)}\tilde\phi(s-\gamma)x^{s-\gamma-1} \zeta(s+\gamma)\zeta(s-\gamma)ds ,\qquad x>0, \ |\Re(\gamma)|<1.
 \end{equation*}

\end{prop}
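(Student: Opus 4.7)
The plan is to expand the Dirichlet polynomial by linearity: the left-hand side of~\eqref{TwFormula} equals $\sum_{n \le N} a_n n^{-1/2} I_n(\alpha,\beta)$, where
$$I_n(\alpha,\beta) := \int_\R \zeta(\tfrac12+it+\alpha)\zeta(\tfrac12-it+\beta)\,n^{-it}\,\phi(t/T)\,dt.$$
The task thus reduces to obtaining an asymptotic for $I_n(\alpha,\beta)$ that is uniform in the small shifts $\alpha,\beta$ and in $n \le N=T^\vartheta$, so that after summation the cumulative error is $O_\eps(T^{1/2+\eps})$.

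To analyse $I_n$, I would use the Mellin representation $\phi(t/T) = \frac{1}{2\pi i}\int_{(c)}\tilde\phi(w) T^w t^{-w}\,dw$ for $t>0$ (valid since $\supp\phi \subset \R_{>0}$), exchange orders of integration, and then appeal to the standard shifted second-moment machinery in the spirit of Ingham--Atkinson--Motohashi (and~\cite{GonekInventiones} in the unshifted twisted case). Concretely, one applies the functional equation $\zeta(\tfrac12-it+\beta) = \chi(\tfrac12-it+\beta)\zeta(\tfrac12+it-\beta)$ to one of the zeta factors, expands both zetas by their Dirichlet series in the convergent regions, performs the $t$-integration via stationary phase, and collects the surviving main term. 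The result decomposes naturally into a \emph{true diagonal} piece and a \emph{dual diagonal} piece, and these are precisely the two residues of the kernel $F_\gamma(T/(2\pi n))$ at $s=1\pm\gamma$: the residue at $s=1+\gamma$ contributes $\tilde\phi(1)\zeta(1+2\gamma)$ (the classical mean value) and that at $s=1-\gamma$ contributes $\tilde\phi(1-2\gamma)(T/(2\pi n))^{-2\gamma}\zeta(1-2\gamma)$ (the contribution from the functional equation). With the symmetrized shift $\gamma=(\alpha+\beta)/2$ these combine into the single contour integral $F_\gamma(T/(2\pi n))$, giving $I_n(\alpha,\beta)=T\,n^{-1/2-\alpha}F_\gamma(T/(2\pi n))$ up to error.

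The hardest step is the sharp error bound $O_\eps(T^{1/2+\eps})$, uniform in $\alpha,\beta \ll 1/\log T$ and in $n \le T^\vartheta$. After the functional equation, the off-diagonal terms become oscillatory sums whose cancellation is controlled by Poisson summation followed by Weyl-type bounds, or, depending on the regime, by standard estimates for shifted convolution sums; the length condition $N \le T^\vartheta$ is what keeps their cumulative size within $T^{1/2+\eps}$. The smoothness of $\phi$ is essential here: its Mellin transform $\tilde\phi$ decays rapidly on vertical lines, which is what justifies both the contour shifts used to extract the main term and the saddle-point expansions used to bound the error. One also needs to verify that apparent singularities of $F_\gamma$ at $\gamma=0$ (arising from $\zeta(1\pm 2\gamma)$) cancel between the two residue contributions so that the final asymptotic is regular as $\alpha,\beta\to 0$; this cancellation is visible from the symmetry $\gamma \mapsto -\gamma$ of the defining contour integral for $F_\gamma$.
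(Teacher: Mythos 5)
Your outline --- expand by linearity, reduce to the twisted integrals $I_n$, apply the functional equation, split into a diagonal and a dual contribution, and use the rapid decay of $\tilde\phi$ on vertical lines to justify contour shifts --- is in the same family of ideas as the paper's argument, which proceeds via the balanced approximate functional equation for $\zeta(\tfrac12+it+\alpha)\zeta(\tfrac12-it+\beta)$. However, there are two genuine gaps in your sketch.

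First, you describe $F_\gamma(T/(2\pi n))$ as arising from ``the two residues at $s=1\pm\gamma$,'' with the diagonal and dual-diagonal pieces ``combining into the single contour integral.'' That is precisely what $F_\gamma$ is \emph{not}: it is a full contour integral, and the two residues only give its asymptotic expansion as $x\to\infty$, up to an $O_C(x^{-C})$ error, exactly as recorded in~\eqref{fff}. In the transition range $n\asymp T^{1+o(1)}$, where $x=T/(2\pi n)\asymp 1$, the residue pair is nowhere near $F_\gamma(x)$, and the paper stresses right after~\eqref{fff} that the proposition is classical for $N\ll T^{1-\delta}$ and trivial for $n>T^{1+\delta}$, so that the entire content of the statement is the uniform handling of this transition. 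Your argument, as written, would at best reproduce the residue approximation and hence the classical result. The paper's actual mechanism is to keep the diagonal in unevaluated Mellin form (the integral $\mathcal{D}$), process the off-diagonal by a Taylor expansion in $\Delta/(nm_2)$ followed by a double integration by parts and Mellin-contour manipulations, observe that a residue at $s=1+w$ produces exactly $-\mathcal{D}$, and then combine the surviving integral with the corresponding dual-piece integral so that the two contours close up into the contour defining $F_\gamma$. That cancellation-and-recombination is the heart of the proof and is invisible in your outline.

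Second, the error toolkit you propose --- stationary phase, Poisson summation followed by Weyl-type bounds, shifted-convolution estimates --- is both heavier than needed and misdirected. In the paper, after truncating $|\Delta|=|nm_2-m_1|\le nm_2T^{-1+2\delta}$ by repeated (non-stationary) integration by parts in $t$, the off-diagonal is controlled entirely by Taylor expansion, two further integrations by parts to render the $\Delta$-sum absolutely convergent, and elementary Mellin-transform estimates; no exponential-sum input enters at all. It is also not clear, without a concrete argument, that your heavier approach would produce an error that is uniform in $n$ over the full range up to and beyond $T$, which is essential since $N\le T^\vartheta$ with $\vartheta$ arbitrary.
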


Moving the line of integration to the left or to the right, one immediately sees that for any $C>0$ one has
\begin{equation}\label{fff}
F_\gamma(x)=\begin{cases}
\tilde \phi(1-2\gamma)\zeta(1-2\gamma)x^{-2\gamma}+\tilde\phi(1)\zeta(1+2\gamma)+O_C(x^{-C}) & \text{as $x\to+\infty$,}\\
O_C(x^{C}) & \text{as $x\to0^+$,}
\end{cases}
\end{equation}
where the first line has to be interpreted as the limit if $\gamma=0$.
In particular, the sum on the right hand side of~\eqref{TwFormula} can be truncated at $T^{1+\delta}$ at negligible cost for any fixed $\delta>0$. 

If $N=O(T^{1-\delta})$ for fixed $\delta>0$ the asymptotic in Proposition~\ref{TwSM} is classical (see e.g.~\cite[Lemma 2.4]{FazzariThesis} or~\cite{BCHB,BCR} with straight-forward modifications), whereas the case where $a_n=0$ for $n<T^{1+\delta}$ is trivial. In particular, the new contribution of the above proposition is the handling of the terms $n\asymp T^{1+o(1)}$ in the transition between the two ranges of~\eqref{fff}, as needed when computing the lower order terms in Theorem~\ref{mt2}. 

\medskip

An asymptotic formula for the twisted fourth moment of zeta was computed in~\cite{HY} and~\cite{BBLR}. In particular, in the latter work the authors compute the asymptotic of $|\zeta|^4$ times a product of Dirichlet polynomials of lengths $T^{\vartheta_1}$ and $T^{\vartheta_2}$ with $\vartheta_1=\vartheta_2<\frac14$ and a  more precise bookkeeping in the proof would give $\vartheta_1+\vartheta_2+2\max(\vartheta_1,\vartheta_2)<1$ (and thus $\vartheta_1<\frac13$ if $\vartheta_2=0$). We refine the arguments of~\cite{BBLR}, and in fact slightly simplify the proof, so that to allow to handle the case $\vartheta_1+\vartheta_2<\frac12$. In doing so we also refine similarly the corresponding quadratic divisor problem, cf. Proposition~\ref{qdpt} below. 

\begin{prop}\label{p4m}
Let $T\geq2, \vartheta_1,\vartheta_2\geq0$ and let $\phi:\R\to\R$ be of compact support in $\R_{>0}$ with derivatives satisfying $\Phi^{(j)}(x) \ll_j T^\epsilon$ for any $j\geq0$.
Let $A(s)=\sum_{a\leq T^{\vartheta_1}}\balpha_a a^{-s}$ and $B(s)=\sum_{b\leq T^{\vartheta_2}}\bbeta_b b^{-s}$ be Dirichlet polynomials with $\balpha_a \ll a^{\varepsilon}$ and $\bbeta_b \ll b^{\varepsilon}$. Then, for any $\alpha,\beta,\gamma,\delta\ll(\log T)^{-1}$ and any $\eps>0$ as $T\to\infty$ we have
\begin{align*}
&\int_\R \zeta (\tfrac12+it+\alpha)\zeta(\tfrac12+it+\beta)\zeta(\tfrac12-it+\gamma)\zeta(\tfrac12-it+\delta) A(\tfrac 12 + it) \overline{B(\tfrac 12 + it)}
\Phi\Big(\frac tT\Big)\,dt\\
&\hspace{3em}=  \sum_{g} \sum_{(a,b) = 1} \frac{\balpha_{ga} \overline{\bbeta_{gb}}}{g ab}\int_\R\Phi\Big(\frac tT\Big)\bigg ( \mathcal{Z}_{\alpha, \beta, \gamma, \delta,a,b} + \Big ( \frac{t}{2\pi} \Big )^{-\alpha - \beta - \gamma - \delta} \mathcal{Z}_{-\gamma, -\delta, -\alpha, -\beta,a,b} \bigg ) \,dt\\
&\hspace{3em}\quad + 
\sum_{g} \sum_{(a,b) = 1} \frac{\balpha_{ga} \overline{\bbeta_{gb}}}{g ab}\int_\R\Phi\Big(\frac tT\Big)\bigg ( \Big ( \frac{t}{2\pi} \Big )^{-\alpha - \gamma} \mathcal{Z}_{-\gamma, \beta,-\alpha, \delta,a,b}
+ \Big ( \frac{t}{2\pi} \Big )^{-\alpha - \delta} \mathcal{Z}_{-\delta, \beta, \gamma, -\alpha,a,b} \\ 
& \hspace{14em}
  + \Big ( \frac{t}{2\pi} \Big )^{-\beta - \gamma} \mathcal{Z}_{\alpha, -\gamma, -\beta, \delta,a,b}+ \Big ( \frac{t}{2\pi} \Big )^{-\beta - \delta} \mathcal{Z}_{\alpha, -\delta, \gamma, -\beta,a,b} \bigg)\,dt\\
&\hspace{3em}\quad+O_\eps\Big(T^{\frac12+\vartheta_1+\vartheta_2+\eps}+T^{\frac34+(\vartheta_1+\vartheta_2)/2+\eps}\Big),
\end{align*}
where
$\mathcal{Z}_{\alpha,\beta,\gamma,\delta,a,b}=\mathcal{A}_{\alpha,\beta,\gamma,\delta}\mathcal{B}_{\alpha,\beta,\gamma,\delta,a}\mathcal{B}_{\gamma,\delta,\alpha,\beta,b},$
with
\begin{align*}
\mathcal{A}_{\alpha, \beta, \gamma, \delta} =&   
\frac{\zeta(1 + \alpha + \gamma) \zeta(1 +\alpha + \delta) \zeta(1 + \beta + \gamma)\zeta(1+\beta+\delta)}{\zeta(2 + \alpha + \beta + \gamma + \delta)}, 
\end{align*}
$$
\mathcal{B}_{\alpha,\beta,\gamma,\delta,a}=\prod_{p^\nu||a}\left(\frac{\sum_{j=0}^{\infty}\sigma_{\alpha,\beta}(p^j)\sigma_{\gamma,\delta}(p^{j+\nu})p^{-j}}{\sum_{j=0}^{\infty}\sigma_{\alpha,\beta}(p^j)\sigma_{\gamma,\delta}(p^{j})p^{-j}}\right)
$$
and $\sigma_{\alpha,\beta}(n)=\sum_{n_1n_2=n}n_{1}^{-\alpha}n_{2}^{-\beta}$. 
\end{prop}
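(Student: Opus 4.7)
The plan is to refine the argument of \cite{BBLR} in a symmetric way. First I would open the two Dirichlet polynomials and reorganise the sum according to $g=\gcd$, writing
\begin{align*}
A(\tfrac12+it)\overline{B(\tfrac12+it)}=\sum_{g}\sum_{(a,b)=1}\frac{\balpha_{ga}\,\overline{\bbeta_{gb}}}{g\sqrt{ab}}\Big(\frac{a}{b}\Big)^{-it},
\end{align*}
so that the outer structure of the claimed formula is in place from the beginning. Then I would apply the approximate functional equation to the product $\zeta(\tfrac12+it+\alpha)\zeta(\tfrac12+it+\beta)\zeta(\tfrac12-it+\gamma)\zeta(\tfrac12-it+\delta)$ via the CFKRS recipe: the product is expressed as a sum of six terms, one for each choice of which subset of $\{\alpha,\beta\}$ is swapped with a shift from $\{-\gamma,-\delta\}$ through the gamma factors, each term being a weighted divisor-type sum $\sum_{m,n}\sigma_{\ast,\ast}(m)\sigma_{\ast,\ast}(n)(m/n)^{-it}V(mn,t)$.

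Substituting this expansion into the integral and performing the $t$-integration reduces each of the six pieces to a weighted shifted convolution sum for the variable $am-bn$. The diagonal contribution $am=bn$ produces the six main terms $\mathcal{Z}_{\ast,\ast,\ast,\ast,a,b}$: this is standard and follows by pushing a Mellin contour past the pole at $s=0$ of the Dirichlet series
\begin{align*}
D_{a,b}(s):=\sum_{m\geq 1}\frac{\sigma_{\alpha,\beta}(am)\sigma_{\gamma,\delta}(bm)}{m^{1+s}},
\end{align*}
which admits a Ramanujan-type Euler factorisation $D_{a,b}(s)=\mathcal{A}_{\alpha,\beta,\gamma,\delta}(s)\,\mathcal{B}_{\alpha,\beta,\gamma,\delta,a}(s)\,\mathcal{B}_{\gamma,\delta,\alpha,\beta,b}(s)\,\mathcal{R}(s)$ for a nice factor $\mathcal{R}$ regular at $s=0$; the contribution of primes not dividing $ab$ assembles into $\mathcal{A}$ and the local factors at primes dividing $ab$ assemble into the $\mathcal{B}$ factors in the statement.

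The bulk of the work is the estimation of the off-diagonal contribution $am\ne bn$, and this is the main obstacle. For each of the six CFKRS pieces one obtains a shifted convolution of two divisor functions which is handled by the refined quadratic divisor estimate, Proposition~\ref{qdpt}. Its proof is based on Kuznetsov's formula and bounds for sums of Kloosterman sums, and the key refinement compared with \cite{BBLR} is to keep the roles of $a$ and $b$ on an equal footing rather than optimising one against the other; this yields two error terms coming respectively from the discrete and continuous spectral parts. Summing the resulting bounds over $a,b,g$ and over the six pieces gives a total off-diagonal error of size $O_\eps(T^{1/2+\vartheta_1+\vartheta_2+\eps}+T^{3/4+(\vartheta_1+\vartheta_2)/2+\eps})$, which combined with the diagonal main terms above produces the claimed asymptotic.
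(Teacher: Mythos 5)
Your high-level architecture is the same as the paper's: the proposition is reduced, as in \cite{BBLR}, to a refinement of their Theorem~1.3 on the quadratic divisor problem (stated as Proposition~\ref{qdpt}), and that refinement is proved by sharpening the Kloosterman-sum technology. You have correctly identified the crucial reduction and the fact that the quadratic divisor estimate is the bottleneck. However, two of the key technical claims in your sketch do not match how the argument actually works, and as stated they would not go through.

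First, the description of the main-term extraction via a six-fold CFKRS expansion is the moment heuristic, not the rigorous mechanism. In the \cite{BBLR} (and Hughes--Young) framework one applies \emph{one} approximate functional equation to the product of four zeta factors, and the diagonal of the resulting shifted convolution gives only the unswapped term(s) $\mathcal Z_{\alpha,\beta,\gamma,\delta,a,b}$ and $(t/2\pi)^{-\alpha-\beta-\gamma-\delta}\mathcal Z_{-\gamma,-\delta,-\alpha,-\beta,a,b}$. The four singly-swapped main terms $(t/2\pi)^{-\alpha-\gamma}\mathcal Z_{-\gamma,\beta,-\alpha,\delta,a,b}$, etc., are \emph{not} diagonal contributions: they are produced in the off-diagonal analysis (via Voronoi/delta-method and the ensuing complete Kloosterman sums). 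Presenting the off-diagonal as ``pure error'' and the diagonal as giving ``the six main terms'' inverts this and leaves the actual main-term extraction unexplained.

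Second, the characterisation of the refinement of the quadratic divisor estimate is vague in a way that misses the point. You say the key is ``to keep the roles of $a$ and $b$ on an equal footing'', and you attribute the two error terms $T^{1/2+\vartheta_1+\vartheta_2+\eps}$ and $T^{3/4+(\vartheta_1+\vartheta_2)/2+\eps}$ to ``the discrete and continuous spectral parts''. The paper does not do either of these things. The actual refinement is to the Kloosterman-sum lemma (\cite[Lemma~3.2]{BBLR}, ultimately from Deshouillers--Iwaniec): the paper shows that the hypotheses $X\gg(RSVP)^\eps$ and $(RS)^2\geq\max\{H^2C,\tfrac{SP}{V}(RSVP)^\eps\}$ can be removed entirely (by checking directly via Poisson summation and \cite[Theorem~12]{DI} that the bound remains valid outside those ranges). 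This allows the bound to be applied to the full sums without the range splitting of $d$ and $H$ that \cite{BBLR} carried out, which is precisely what shaves the error from $\vartheta_1+\vartheta_2+2\max(\vartheta_1,\vartheta_2)$ down to $\vartheta_1+\vartheta_2$. The ``discrete/continuous spectrum'' attribution of the two error terms is not supported; the shape of the final error comes from specialising the refined bound $\mathcal E\ll T^\eps(AB)^{1/2}XZ^{-1/2}\big(AB+(ABX)^{5/8}Z^{-1/2}+(AB)^{1/2}XZ^{-3/4}\big)$, not from a spectral decomposition.

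So while you have located the right auxiliary proposition, the two steps that carry the actual content — where the swapped main terms come from, and what precisely is being refined in the Kloosterman-sum machinery — are each described incorrectly, and without fixing them the proof cannot be completed along the lines you outline.
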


\medskip

The paper is organized as follows. In Section~\ref{S3} we deduce Theorem~\ref{mt2} from the above propositions. In Section~\ref{appl} we prove Theorem~\ref{applt}, whereas in Sections~\ref{S2} and~\ref{SL} we prove Propositions~\ref{TwSM} and~\ref{p4m}. Finally in the Appendix we prove that the kernels appearing in~\eqref{w1ld} match those in \cite{HughesThesis}.

\subsection*{Acknowledgments}
S. Bettin is member of the INdAM group GNAMPA and his work is partially supported by PRIN 2017 ``Geometric, algebraic and analytic methods in arithmetic''. A. Fazzari is supported by the FRG grant DMS 1854398.


\section{The weighted one-level density}\label{S3}
\subsection{The case $k=1$}
We prove a shifted version of~\eqref{9febbr.1}. For $\alpha,\beta\ll 1/\log T$ we shall prove
\begin{equation}\label{fse}
\begin{split}
&\int_{\R}N_f(t)\zeta(\tfrac12+it+\alpha)\zeta(\tfrac12-it+\beta)
\phi\Big(\frac{t}{T}\Big)\, dt \\
&\hspace{6em}=\frac{T}{\log T} \int_{\R}{f}(x)
\bigg(\psi_{\alpha,\beta}(T)+\mathcal G_{\alpha,\beta}\Big(\frac{2\pi i x}{\log T}, T\Big) \bigg)dx+O_{\eps}(T^{\frac12+\eps}),
\end{split}
\end{equation}
where
\begin{equation}\label{bga}
\begin{split}
\psi_{\alpha,\beta}(T) &= \log(\tfrac T{2\pi})\Big(\tilde \phi(1-\alpha-\beta)\zeta(1-\alpha-\beta)(T/2\pi)^{-\alpha-\beta}+\tilde\phi(1)\zeta(1+\alpha+\beta)\Big)\\
&\quad+\tilde \phi'(1-\alpha-\beta)\zeta(1-\alpha-\beta)(T/2\pi)^{-\alpha-\beta}+\tilde\phi(1)'\zeta(1+\alpha+\beta),
\end{split}
\end{equation}
and
\begin{equation*}
\begin{split}
\mathcal  G_{\alpha,\beta}(y,T)&= \tilde\phi(1)\zeta(1+\alpha+\beta)\Big(\frac{\zeta'}{\zeta}(1+y+\alpha)+\frac{\zeta'}{\zeta}(1+y+\beta)\Big)\\
&\quad+\tilde\phi(1-\alpha-\beta)\Big(\frac{T}{2\pi}\Big)^{-\alpha-\beta}\zeta(1-\alpha-\beta) \Big(\frac{\zeta'}{\zeta}(1+y-\beta)+\frac{\zeta'}{\zeta}(1+y-\alpha)\Big)\\
&\quad- \tilde\phi(1-y-\alpha)\Big(\frac{T}{2\pi}\Big)^{-y-\alpha}\zeta(1-y-\alpha)\zeta(1-y+\beta)\\
&\quad-\tilde\phi(1-y-\beta)\Big(\frac{T}{2\pi}\Big)^{-y-\beta}\zeta(1-y+\alpha)\zeta(1-y-\beta).
\end{split}
\end{equation*}
Theorem~\ref{mt2} then follows by letting $\alpha,\beta\to0$.

\medskip

First, we need the following version of the explicit formula.

\begin{lemma}[\cite{HR}, Lemma 2.1]\label{explicit formula}
Let $g$ be a smooth, compactly supported function and $h(r)=\int_{-\infty}^{+\infty}g(u)e^{iru}du$. Moreover we set $\Omega(r)=\frac{1}{2}\Psi(\frac{1}{4}+\frac{1}{2}ir)+\frac{1}{2}\Psi(\frac{1}{4}-\frac{1}{2}ir)-\log\pi$, where $\Psi(s)=\frac{\Gamma'}{\Gamma}(s)$ is the polygamma function, and we denote by $\Lambda(n)$ the von Mangoldt function. Then
\begin{equation}\begin{split}\notag \sum_\gamma h(\gamma)=\frac{1}{2\pi}\int_{-\infty}^{+\infty}h(r)\Omega(r)dr-\sum_{n=1}^{\infty}\frac{\Lambda(n)}{\sqrt n}\Big(g(\log n)&+g(-\log n)\Big)+h\Big(-\frac{i}{2}\Big)+h\Big(\frac{i}{2}\Big). \end{split}\end{equation}
\end{lemma}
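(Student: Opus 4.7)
The proof follows a standard Weil-type explicit formula argument. Set $F(s) := h(-i(s-1/2))$; since $g$ is smooth and compactly supported, $F$ is entire, with rapid decay on any horizontal strip and at most exponential growth in the imaginary direction of $s$, so all contour manipulations below are justified. Fix $c \in (1, 3)$ and define
\begin{equation*}
\mathcal{I} := \frac{1}{2\pi i}\int_{(c)} F(s)\frac{\zeta'}{\zeta}(s)\,ds - \frac{1}{2\pi i}\int_{(1-c)} F(s)\frac{\zeta'}{\zeta}(s)\,ds.
\end{equation*}
Closing the contour into the strip $1-c < \Re(s) < c$ (which avoids all trivial zeros) and applying the residue theorem gives $\mathcal{I} = \sum_\gamma h(\gamma) - h(-i/2)$: the sum collects the residues at the non-trivial zeros $\rho = 1/2+i\gamma$, while the $-h(-i/2)$ comes from the pole of $\zeta'/\zeta$ at $s=1$, which has residue $-1$.

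Now I would evaluate $\mathcal{I}$ a different way via the functional equation $\zeta(s) = \chi(s)\zeta(1-s)$ with $\chi(s) = 2^s\pi^{s-1}\sin(\pi s/2)\Gamma(1-s)$, which yields $\zeta'/\zeta(s) = \chi'/\chi(s) - \zeta'/\zeta(1-s)$. Inserting this into the $(1-c)$-integral and substituting $s \mapsto 1-s$ in the $\zeta'/\zeta(1-s)$ term converts it into an integral on $(c)$ with $F(s)$ replaced by $F(1-s) = h(i(s-1/2))$, so that
\begin{equation*}
\mathcal{I} = \frac{1}{2\pi i}\int_{(c)}\bigl[F(s)+F(1-s)\bigr]\frac{\zeta'}{\zeta}(s)\,ds - \frac{1}{2\pi i}\int_{(1-c)}F(s)\frac{\chi'}{\chi}(s)\,ds.
\end{equation*}
For the first integral, expand the Dirichlet series $\zeta'/\zeta(s) = -\sum_n \Lambda(n)n^{-s}$, valid on $\Re(s)=c>1$; Fourier inversion gives $\frac{1}{2\pi i}\int_{(c)}F(s)n^{-s}\,ds = g(\log n)/\sqrt{n}$ and $\frac{1}{2\pi i}\int_{(c)}F(1-s)n^{-s}\,ds = g(-\log n)/\sqrt{n}$, producing the prime sum $-\sum_n \frac{\Lambda(n)}{\sqrt n}\bigl(g(\log n)+g(-\log n)\bigr)$.

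For the remaining integral I would shift the contour from $(1-c)$ rightward to the critical line $(1/2)$. The only singularity crossed is the simple zero of $\chi$ at $s=0$ (from $\sin(\pi s/2)$), which makes $\chi'/\chi$ have a simple pole of residue $+1$ there; after accounting for the overall minus sign in front of the integral this contributes $+h(i/2)$. On the critical line, logarithmic differentiation of the functional equation yields $\chi'/\chi(1/2+ir) = \log\pi - \tfrac12\Psi(1/4+ir/2) - \tfrac12\Psi(1/4-ir/2) = -\Omega(r)$, so the integral over $(1/2)$ becomes $\frac{1}{2\pi}\int h(r)\Omega(r)\,dr$. Combining the two evaluations of $\mathcal{I}$ and moving the $-h(-i/2)$ to the right-hand side yields the stated identity. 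The main technical point is verifying sufficient horizontal decay to close the contours at $\pm i\infty$, which follows from Stirling's formula together with the Schwartz-type decay of $h$ on the real axis guaranteed by the compact support of $g$.
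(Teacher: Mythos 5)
Your argument is correct and is essentially the standard proof of the explicit formula; the paper itself offers no proof but simply cites \cite{HR}, whose argument (like the classical one in Iwaniec--Kowalski, Theorem 5.12) follows the same route you take: the residue computation picking up the nontrivial zeros and the pole at $s=1$, the Dirichlet-series expansion of $\zeta'/\zeta$ on $\Re(s)=c>1$ giving the prime sum, and the evaluation $\chi'/\chi(\tfrac12+ir)=-\Omega(r)$ via the symmetric form of the functional equation all check out, as does the residue $+h(i/2)$ from the simple zero of $\chi$ at $s=0$. The only slip is terminological: $F(s)=\int g(u)e^{(s-1/2)u}\,du$ decays rapidly as $|\Im(s)|\to\infty$ on vertical strips and grows at most exponentially in the \emph{real} direction (you have the two directions interchanged), and it is this vertical decay, together with the standard bound $\zeta'/\zeta(\sigma+iT_j)\ll\log^2 T_j$ along suitable heights $T_j\to\infty$, that justifies closing the contours.
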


We let $f$ be as in Theorem~\ref{mt2} and apply Lemma~\ref{explicit formula} with $h(r)=f(\frac{V \log T }{2\pi}(r-t))$. We obtain
\begin{equation}\label{bft}
N_f(t)={N^*_f(t)}+S_f(t), 
\end{equation}
where
\begin{align*}
{N^*_f(t)}&=\frac{1}{2\pi}\int_{\R}f\bigg(\frac{ \log T}{2\pi}(r-t)\bigg)\Omega(r)dr+f\Big(-\frac{ \log T}{2\pi}(t+\tfrac{i}{2})\Big)+f\Big(\frac{\log T}{2\pi}(\tfrac{i}{2}-t)\Big)
,\\
S_f(t)&=-\frac{1}{\log T}\sum_{n=1}^{\infty}\frac{\Lambda(n)}{\sqrt n}\hat{f}\left(\frac{\log n}{\log T}\right)(n^{-it}+n^{it}).
\end{align*}

Now, we observe that by Stirling's formula we have $\Omega(t+u)=\log(\frac t{2\pi})+O((1+|t|)^{-1/2})$ for $u,t\in\R$ with $u=O(|t|^{1/2})$. Thus, since $\Omega(r)=O(\log(1+|r|))$, by the fast decaying of $f$ we obtain the following slight strengthening of~\cite[Lemma 2.2]{HR}
\begin{align*}
\frac1{2\pi}\int_{\R}f\Big(\frac{\log T}{2\pi}(r-t)\Big)\log(\tfrac t{2\pi})dr=\frac{\log(t/2\pi)}{\log T}\hat{f}(0)+O((1+|t|)^{-1/2}).
\end{align*}
Moreover, by Fourier inversion and integrating by parts $m$-times, we get
\begin{equation}\notag
f(z)=\left(\frac{i}{2\pi}\right)^m\int_{-\infty}^{+\infty}\frac{e^{2\pi izy}}{z^m}\hat f^{(m)}(y)dy=O_m(|z|^{-m}e^{2\pi a|\Im(z)|})
\end{equation}
for $z\in\C_{\neq0}$ and $\supp(\hat f)\subseteq [-a,a]$. Thus, choosing $m$ sufficiently large with respect to $a$, we obtain
\begin{align}\label{nstf}
{N^*_f(t)}=\frac{\log(t/2\pi)}{\log T}\hat{f}(0)+O(T^{-1/2}),
\end{align}
for $|t|\asymp T$. In particular, letting $\mathscr Z_{\alpha,\beta}(t):=\zeta(\frac12+it+\alpha)\zeta(\frac12-it+\beta)$, we have
\begin{equation*}
\int_\R N^*_f(t)\mathscr Z_{\alpha,\beta}(t)\phi\Big(\frac{t}{T}\Big)\, dt 
=\frac{\hat{f}(0)}{\log T}\int_\R
\mathscr Z_{\alpha,\beta}(t)\phi\Big(\frac{t}{T}\Big)\log(\tfrac t{2\pi})\, dt +O(T^{1/2}\log T).
\end{equation*}
Then, an application of Proposition~\ref{TwSM} with $A(s)=1$ and $\phi(\cdot )(\log (T/2\pi)+\log(\cdot))$ in place of $\phi(\cdot)$
yields
\begin{equation}\label{nfs2}
\begin{split}
\int_\R N^*_f(t)\mathscr Z_{\alpha,\beta}(t) \phi\Big(\frac{t}{T}\Big)\, dt&=\frac{\hat{f}(0)}{\log T}\psi_{\alpha,\beta}(T)
+ O_\eps(T^{\frac12+\varepsilon})
\end{split}
 \end{equation}
with $\psi_{\alpha,\beta}(T)$ as in~\eqref{bga}.
\medskip

We now move to computing the weighted average of $S_f$. By Proposition~\ref{TwSM}, for any $\eps>0$ we have 
$$ \int_{\R}S_f(t)\mathscr Z_{\alpha,\beta}(t)\phi\Big(\frac{t}{T}\Big)\, dt = -\frac{T}{\log T}\sum_n\frac{\Lambda(n)}{n}\hat f\Big(\frac{\log n}{\log T}\Big)(n^{-\alpha}+n^{-\beta}) F_{\frac{\alpha+\beta}2}\left(\frac{T}{2\pi n}\right) + O_\eps(T^{\frac12+\eps}).$$
We let $\gamma:=\frac{\alpha+\beta}2$ and expand the Fourier transform, so that the main term on the right becomes
\begin{align*}
& \frac{T}{\log T}\int_{\R}f(x)\frac{1}{2\pi i}\int_{(2)}\tilde\phi(s-\gamma)\left(\frac{T}{2\pi}\right)^{s-\gamma-1}\zeta(s-\gamma)\zeta(s+\gamma)\\
&\hspace{12em}\times \bigg(\frac{\zeta'}{\zeta}\Big(s+\frac{2\pi ix}{\log T}+\frac{\alpha-\beta}2\Big)+\frac{\zeta'}{\zeta}\Big(s+\frac{2\pi ix}{\log T}-\frac{\alpha-\beta}2\Big)\bigg)\,ds\,dx.
 \end{align*}
We shift the integral over $s$ to $\Re(s)=0$. The integral on the new line of integration is $O(1)$, whereas the residues at the poles at $s=1-\frac{2\pi i x}{\log T}\pm\frac{\alpha-\beta}2$ and $s=1\pm\gamma $ give a contribution of 
\begin{align*}
&- \frac{T}{\log T}\int_{\R}f(x)\bigg(\tilde\phi\Big(1-\frac{2\pi i x}{\log T}-\alpha\Big)\Big(\frac{T}{2\pi}\Big)^{-\frac{2\pi i x}{\log T}-\alpha}\zeta\Big(1-\frac{2\pi i x}{\log T}-\alpha\Big)\zeta\Big(1-\frac{2\pi i x}{\log T}+\beta\Big)\\
&\hspace{8em}+\tilde\phi\Big(1-\frac{2\pi i x}{\log T}-\beta\Big)\Big(\frac{T}{2\pi}\Big)^{-\frac{2\pi i x}{\log T}-\beta}\zeta\Big(1-\frac{2\pi i x}{\log T}+\alpha\Big)\zeta\Big(1-\frac{2\pi i x}{\log T}-\beta\Big)\bigg)\,dx
 \end{align*}
and
\begin{align*}
& \frac{T}{\log T}\int_{\R}f(x)\bigg(\tilde\phi(1)\zeta(1+2\gamma)\bigg(\frac{\zeta'}{\zeta}\Big(1+\frac{2\pi ix}{\log T}+\alpha\Big)+\frac{\zeta'}{\zeta}\Big(1+\frac{2\pi ix}{\log T}+\beta\Big)\bigg)\\
&\hspace{6em}+\tilde\phi(1-2\gamma)\Big(\frac{T}{2\pi}\Big)^{-2\gamma}\zeta(1-2\gamma) \Big(\frac{\zeta'}{\zeta}\Big(1+\frac{2\pi ix}{\log T}-\beta\Big)+\frac{\zeta'}{\zeta}\Big(1+\frac{2\pi ix}{\log T}-\alpha\Big)\Big)\bigg)\,dx.
 \end{align*}
Thus, 
\begin{align*}
\int_{\R}S_f(t)\mathscr Z_{\alpha,\beta}(t)\phi\Big(\frac{t}{T}\Big)\, dt&= \frac{T}{\log T}\int_{\R}{f}(x)
\, \mathcal G_{\alpha,\beta}\Big(\frac{2\pi i x}{\log T}, T\Big) \,dx+O_\eps(T^{\frac12+\eps})
\end{align*}
and~\eqref{fse} follows by~\eqref{bft} and~\eqref{nfs2}.

\subsection{The case $k=2$}
As in the previous section one easily sees that 
\begin{equation}\label{29apr.1}
\frac{1}{c_4T(\log T)^4}\int_{\mathbb R}N_f^*(t)|\zeta(\tfrac{1}{2}+it)|^{4}\phi\Big(\frac{t}{T}\Big) dt 
= \tilde\phi(1) \hat f(0)+O(1/\log T).
\end{equation}
To compute the average of $S_f(t)$ it is convenient to introduce shifts $\alpha,\beta,\gamma,\delta$ of size $\ll 1/\log T$ and consider
\begin{equation*}
I_{\alpha,\beta,\gamma,\delta}:=\int_{\mathbb R} P(t)\zeta(\tfrac{1}{2}+\alpha+it)
\zeta(\tfrac{1}{2}+\beta+it)\zeta(\tfrac{1}{2}+\gamma-it)\zeta(\tfrac{1}{2}+
\delta-it)\phi\Big(\frac{t}{T}\Big)dt. 
\end{equation*}
with $P(t):=\sum_{n\geq1}\frac{\Lambda(n)}{n^{1/2+it}}\hat f(\frac{\log n}{\log T})$.
By Proposition~\ref{p4m} we have
\begin{equation*}\begin{split}
I_{\alpha,\beta,\gamma,\delta}&=\sum_{n=1}^{\infty}\frac{\Lambda(n)}{n}\hat f\Big(\frac{\log n}{\log T}\Big)\int_{\mathbb R}\Big( 
\mathcal Z_{\alpha,\beta,\gamma,\delta,n,1} 
+ (\tfrac{t}{2\pi})^{-\alpha-\gamma}\mathcal Z_{-\gamma,\beta,-\alpha,\delta,n,1}  
\\
&\hspace{5.5em}+ (\tfrac{t}{2\pi})^{-\alpha-\delta}\mathcal Z_{-\delta,\beta,\gamma,-\alpha,n,1}+ (\tfrac{t}{2\pi})^{-\beta-\gamma}\mathcal Z_{\alpha,-\gamma,-\beta,\delta,n,1}    
+ (\tfrac{t}{2\pi})^{-\beta-\delta}\mathcal Z_{\alpha,-\delta,\gamma,-\delta,n,1}  \\
&\hspace{5.5em}+
 (\tfrac{t}{2\pi})^{-\alpha-\beta-\gamma-\delta}\mathcal Z_{-\gamma,-\delta,-\alpha,-\beta,n,1} 
\Big) \phi\Big(\frac{t}{T}\Big)dt.
\end{split}\end{equation*}
For $n=p^\nu \ll T$ a prime power, we have
\begin{equation*}\begin{split}\notag
\mathcal B_{\alpha,\beta,\gamma,\delta,p^\nu} 
& =\bigg(\sum_{j=0}^\infty \sigma_{\alpha,\beta}(p^j)\sigma_{\gamma,\delta}(p^{j+\nu})p^{-j}\bigg)
\bigg(\sum_{j=0}^\infty \sigma_{\alpha,\beta}(p^j)\sigma_{\gamma,\delta}(p^{j})p^{-j}\bigg)^{-1} \\ 
& = \big(\sigma_{\gamma,\delta}(p^{\nu}) + O(\nu/p)\big)(1+O(1/p))^{-1}
= \sigma_{\gamma,\delta}(p^{\nu}) + O(\nu/p)
\end{split}\end{equation*}
and in particular  $\mathcal B_{\alpha,\beta,\gamma,\delta,p} = p^{-\gamma} + p^{-\delta}  +O(1/p)$ if $\nu=1$. Estimating trivially the error term and the terms with $\nu\geq2$ we obtain
\begin{equation*}\begin{split}
I_{\alpha,\beta,\gamma,\delta}&= \sum_{p=1}^{\infty}\frac{\log p}{p}\hat f\Big(\frac{\log p}{\log T}\Big)\int_{\mathbb R}\Big( 
\mathcal A_{\alpha,\beta,\gamma,\delta} \big(p^{-\gamma} + p^{-\delta}\big)
+ (\tfrac{t}{2\pi})^{-\alpha-\gamma}\mathcal A_{-\gamma,\beta,-\alpha,\delta}  \big(p^{\alpha} + p^{-\delta}\big) \\
&\quad+ (\tfrac{t}{2\pi})^{-\alpha-\delta}\mathcal A_{-\delta,\beta,\gamma,-\alpha} \big(p^{-\gamma} + p^{\alpha}\big) 
+ (\tfrac{t}{2\pi})^{-\beta-\gamma}\mathcal A_{\alpha,-\gamma,-\beta,\delta}  \big(p^{\beta} + p^{-\delta}\big)   \\ 
&\quad+ (\tfrac{t}{2\pi})^{-\beta-\delta}\mathcal A_{\alpha,-\delta,\gamma,-\delta}  \big(p^{-\gamma} + p^{\beta}\big)  
+ (\tfrac{t}{2\pi})^{-\alpha-\beta-\gamma-\delta}\mathcal A_{-\gamma,-\delta,-\alpha,-\beta}  \big(p^{\alpha} + p^{\beta}\big) 
\Big) \phi\Big(\frac{t}{T}\Big)dt\\
&\quad+O(T(\log T)^4).
\end{split}\end{equation*}
  Now for any $z\ll1/\log T$ we have
\begin{equation*}
\sum_p \frac{\log p}{p}\hat f\left(\frac{\log p}{\log T}\right) p^{-z}
= \log T \int_0^{\infty} \hat f(y) T^{-yz} dy+O(1)
\end{equation*}
and thus 
\begin{equation}\begin{split}\notag
I_{\alpha,\beta,\gamma,\delta}&= \log T \int_0^\infty \hat f(y)
\int_{\mathbb R}\Big( 
\mathcal A_{\alpha,\beta,\gamma,\delta} \big(T^{-y\gamma} + T^{-y\delta}\big)
+ (\tfrac{t}{2\pi})^{-\alpha-\gamma}\mathcal A_{-\gamma,\beta,-\alpha,\delta}  \big(T^{y\alpha} + T^{-y\delta}\big) \\
&\quad+ (\tfrac{t}{2\pi})^{-\alpha-\delta}\mathcal A_{-\delta,\beta,\gamma,-\alpha} \big(T^{-y\gamma} + T^{y\alpha}\big) 
+ (\tfrac{t}{2\pi})^{-\beta-\gamma}\mathcal A_{\alpha,-\gamma,-\beta,\delta}  \big(T^{y\beta} + T^{-y\delta}\big)   \\ 
&\quad+ (\tfrac{t}{2\pi})^{-\beta-\delta}\mathcal A_{\alpha,-\delta,\gamma,-\delta}  \big(T^{-y\gamma} + T^{y\beta}\big)  
+ (\tfrac{t}{2\pi})^{-\alpha-\beta-\gamma-\delta}\mathcal A_{-\gamma,-\delta,-\alpha,-\beta}  \big(T^{y\alpha} + T^{y\beta}\big) 
\Big) \phi\Big(\frac{t}{T}\Big)dt \;dy\\
&\quad+O(T(\log T)^4).
\end{split}\end{equation}
Computing the limit as $\alpha,\beta,\gamma,\delta\to 0$ with the help of Sage, we obtain
\begin{equation*}
\frac{1}{c_4T(\log T)^4}\int_{\mathbb R}P(t)|\zeta(\tfrac{1}{2}+it)|^{4}\phi\Big(\frac{t}{T}\Big) dt 
= \tilde\phi(1)\log T \int_{0}^{\infty}\hat f(y)(2y^3-4y+2)dy + O(T).
\end{equation*}
Since $S_f(t)=-\frac1{\log T}(P(t)+P(-t))$ and $f$ is even, we obtain by conjugation that 
\begin{equation*}
\frac{1}{c_4T(\log T)^4}\int_{\mathbb R}S_f(t)|\zeta(\tfrac{1}{2}+it)|^{4}\phi\Big(\frac{t}{T}\Big) dt 
= - 2 \tilde\phi(1)  \int_{0}^{\infty}\hat f(y)(2y^3-4y+2)dy + O_{f,\phi}\Big(\frac{1}{\log T}\Big).
\end{equation*}
By~\eqref{29apr.1} we deduce
\begin{equation*}\begin{split}
&\frac{1}{c_4 T(\log T)^4}\int_{\mathbb R}N_f(t)|\zeta(\tfrac{1}{2}+it)|^{4}\phi\Big(\frac{t}{T}\Big) dt \\
&\hspace{2cm}= 
\tilde\phi(1) \int_{-\infty}^{+\infty}\hat f(y) \Big(\delta_0(y) +(-2|y|^3+4|y|-2)\chi_{[-1,1]}(y)  \Big)dy +  O_{f,\phi}\Big(\frac{1}{\log T}\Big)
\end{split}\end{equation*}
where we could add $\chi_{[-1,1]}$ in the above expression since $\hat f$ is supported in $(-\frac{1}{2},\frac{1}{2})$. The claim then follows by Plancherel identity. 

\section{Zeros next to large values}\label{appl}

We assume the Riemann hypothesis throughout this section. Also, we denote $L:=\log\log T$ and $\chi:=\chi_{[-2\pi,2\pi]}$. Finally, let $\eta$ be any fixed real valued function, which is even, smooth, with Fourier support in $(-1,1)$ and which satisfies $\chi\leq \eta$. For any $\delta>0$, let then $\eta_\delta(t):=\eta(t \delta)$ so that $\hat \eta_\delta(x)=\delta^{-1}\hat \eta(x/\delta)$ is supported in $(-\delta,\delta)$.

\begin{lemma}\label{bfn}
For any $B>0$ if $A$ is sufficiently large we have 
$$\meas\bigg(\bigg\{t\in[T,2T]\biggm | N_{\chi}(t)>  A\frac{L}{\log L}\bigg\}\bigg)=O(T/\log^B T).$$
\end{lemma}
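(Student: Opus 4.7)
First I would use the majorization $\chi\le\eta$ to get the pointwise bound $N_\chi(t)\le N_\eta(t)$, reducing to an estimate for $\meas\{t\in[T,2T]:N_\eta(t)>AL/\log L\}$. Applying the explicit formula (Lemma~2.1) with $h(r)=\eta((r-t)\log T/(2\pi))$ and running the Stirling analysis that led to~\eqref{nstf} gives
\[
N_\eta(t)=N_\eta^*(t)+S_\eta(t),\qquad N_\eta^*(t)=\hat\eta(0)\frac{\log(t/2\pi)}{\log T}+O(T^{-1/2})=O(1),
\]
uniformly for $t\in[T,2T]$, where
\[
S_\eta(t)=-\frac{1}{\log T}\sum_{n\ge 2}\frac{\Lambda(n)}{\sqrt n}\hat\eta\Big(\frac{\log n}{\log T}\Big)(n^{it}+n^{-it})
\]
is a Dirichlet polynomial supported on $n<T$ (using $\supp\hat\eta\subset(-1,1)$). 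For $T$ large the event $\{N_\chi(t)>AL/\log L\}$ thus forces $|S_\eta(t)|\ge AL/(2\log L)$, and it suffices to produce a large-deviation bound for $S_\eta$.

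The heart of the argument is the Gaussian-type moment estimate
\[
\int_T^{2T}|S_\eta(t)|^{2k}\,dt \ll T(Ck)^{k}, \qquad 1\le k\le c\,\frac{\log T}{\log\log T},
\]
for absolute constants $c,C>0$. The primes dominate $S_\eta$, and writing $c_p=\log p/(\sqrt p\log T)\hat\eta(\log p/\log T)$, Mertens' theorem gives $\sum_p c_p^2=O(1)$ (in contrast with Selberg's $S(t)=\pi^{-1}\arg\zeta(\tfrac12+it)$, where the analogous variance is of size $L$). Expanding $S_\eta^{2k}$, the dominant contribution comes from the diagonal pairings $(p^{it})(p^{-it})=1$ and is bounded by $T(2k-1)!!\bigl(\sum_p c_p^2\bigr)^k\ll T(Ck)^k$; the off-diagonal terms are handled by splitting the primes at the threshold $T^{1/k}$ and applying standard mean-value theorems for Dirichlet polynomials, following Selberg's classical moment argument.

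Given the moment bound, Markov's inequality with $V=AL/(2\log L)$ yields
\[
\meas\{t\in[T,2T]:|S_\eta(t)|>V\}\le V^{-2k}\int_T^{2T}|S_\eta|^{2k}\,dt\ll T\Big(\frac{4Ck\log^2 L}{A^2L^2}\Big)^k.
\]
Choosing $k=\lfloor A^2L^2/(4eC\log^2 L)\rfloor$ (which satisfies the admissibility constraint $k\le c\log T/\log\log T$ for $T$ large, since $L^3/\log^2 L=o(\log T)$) gives $\meas\{|S_\eta(t)|>V\}\ll T\exp(-A^2L^2/(4eC\log^2 L))$. Because $L/\log^2 L\to\infty$, for any $B>0$ and $A$ chosen sufficiently large in terms of $B$ this is $O(T/\log^B T)$ for all $T$ large; the bound is trivial in the bounded range of $T$.

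The main obstacle is the moment estimate itself. Since $S_\eta$ is a Dirichlet polynomial of length comparable to $T$, a direct application of mean-value theorems is too weak at high moments, and one has to exploit the multiplicative (prime-support) structure via Selberg's truncation at $T^{1/k}$ to reduce the $2k$-th moment to essentially its diagonal value, thereby producing tails of Gaussian order.
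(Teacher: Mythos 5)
Your approach is right in spirit — majorize $\chi$ by a smooth test function, apply the explicit formula, reduce to a high-moment bound on the resulting Dirichlet polynomial over primes, and finish with Markov — but there is a genuine gap at the heart of it: the claimed moment estimate
\[
\int_T^{2T}|S_\eta(t)|^{2k}\,dt\ll T(Ck)^k,\qquad 1\le k\le c\,\frac{\log T}{\log\log T},
\]
for the \emph{fixed} test function $\eta$ is not reachable by the argument you sketch. Since $\supp\hat\eta\subset(-1,1)$, the polynomial $S_\eta$ runs over primes $p<T$, and its variance $\sum_p c_p^2$ is spread essentially uniformly over the whole range $\log p\in(0,\log T)$: the tail $\sum_{T^{1/k}<p<T}c_p^2$ is \emph{comparable} to the full variance, not small. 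Thus the truncation at $T^{1/k}$ does not discard a negligible piece, and for the tail the only tool available is the mean value theorem for Dirichlet polynomials, which at the $2k$-th moment sees a polynomial of length $T^k$ and produces a factor $T^k$ rather than $T$. ``Selberg's classical moment argument'' does not apply here either: in that setting (and in Soundararajan's moment bounds for $\log|\zeta|$) the tail beyond $T^{1/k}$ is controlled by a separate \emph{pointwise} inequality specific to $\log\zeta$, for which $S_\eta$ has no analogue. So the off-diagonal contribution for the long polynomial is not handled.

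What the paper does instead — and this is the key idea you are missing — is to make the majorizing test function \emph{depend on the moment} $k$. It takes $g=\eta_{1/Q}$ with $Q=[3k]$, so that $\chi\le\chi_{[-Q,Q]}\le g$ still holds, but now $\hat g$ is supported in $(-1/Q,1/Q)$ and the resulting Dirichlet polynomial $S^*_g$ runs only over primes $p<T^{1/Q}$. This length is short enough that Soundararajan's moment lemma (\cite[Lemma~3]{Sou}) applies at the $2k$-th moment precisely because $k<Q/2$, giving $\int_T^{2T}|S^*_g|^{2k}\,dt\ll Tk!C^{2k}$. The price paid is that the ``archimedean'' piece $N^*_g$ is no longer $O(1)$ but $O(Q)=O(k)$; the choice $k\asymp L/\log L$ with $L=\log\log T$ keeps this error below the threshold $AL/\log L$ while making the tail probability $\ll T e^{-c k\log k}=T(\log T)^{-cA}$ superpolynomially small. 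To repair your proof along these lines, you would replace the fixed $\eta$ by this $k$-dependent rescaling before taking the $2k$-th moment, and accordingly lower the moment parameter to $k\asymp L/\log L$ rather than $k\asymp L^2/\log^2 L$.

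Two minor points also worth noting: with the fixed $\eta$ you have $N^*_\eta=O(1)$, which is consistent with the statement you want, but under the corrected (rescaled) setup this becomes $O(k)$ and must be tracked against the threshold; and the constraint $k\le c\log T/\log\log T$ you invoke is, in the fixed-$\eta$ setting, not the relevant one — for a length-$T$ Dirichlet polynomial the usable range is essentially $k=1$.
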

\begin{proof}
For any $Q\in\N_{>2\pi}$, we have $\chi\leq \chi_{[-Q,Q]}\leq g:=\eta_{1/Q}$ and thus $N_{\chi}\leq N_g$. We split $N_g$ as in~\eqref{bft}. By~\eqref{nstf} we have ${N^*_g(t)}=O(Q)$. Moreover, bounding the contribution of prime powers by Mertens' theorem, we deduce that $N_g(t)=S^{*}_g(t)+O(Q)$, where
\begin{align*}
S^*_g(t)&:=-\frac{Q}{\log T}\sum_{p\geq2}\frac{\log p}{\sqrt p}\hat{\eta}\left(\frac{Q\log p}{\log T}\right)(p^{-it}+p^{it}),
\end{align*}
By~\cite[Lemma~3]{Sou} if $k<Q/{2}$ we have
\begin{align*}
\int_{T}^{2T}|S_g^*(t)|^{2k}\,dt &\ll Tk! \frac{Q^{2k}}{(\log T)^{2k}} \bigg( \sum_{p\geq 2}^{\infty}\frac{(\log p)^2}{ p}\bigg|\hat{\eta}\left(\frac{Q\log p}{\log T}\right)\bigg|^2 \bigg)^k\\
& \ll Tk! \frac{Q^{2k}}{(\log T)^{2k}} (C \log T^{1/Q})^{2k}
 \ll  T k!C^{2k}
\end{align*}
for some constant $C$. 
Then, for any $k\geq3$, taking $Q=[3k]$ and $A>0$ we deduce
\begin{align*}
\meas(\{t\in[T,2T]\mid S_g^*(t)>   \sqrt C k/\log k\})&\leq \int_{T}^{2T}\frac {|S_g^*(t)|^{2k}}{ (\sqrt C k/\log k)^{2k}}\,dt \\
&\ll  T k! (k/\log k)^{-2k}\ll \sqrt k e^{-k(1+\log k-2\log\log  k)},
\end{align*}
by Stirling's formula.  The result then follows by taking $k=A\log\log T/\log\log\log T$ with $A$ sufficiently large.
\end{proof}

For $a>0$, $u\in\R$, let $I_a:=[- a/\log T, a/\log T]$ and $M_a(u):=\max_{t\in u+I_a}|\zeta(\tfrac12+it)|$. Also, denote $I=I_1$ and $M=M_1$ and recall that $Z(T):=\{\gamma\in[T,2T]\mid \zeta(\frac12+i\gamma)=0\}$.
\begin{lemma}\label{fl}
Assume that Conjecture~\ref{smtco} holds for some $k\in\N$ and all even functions $f$ with Fourier support in $(-\delta,\delta)$ for some $\delta>0$.
Let $V>0$. Then $\#\{\gamma\in Z(T)\mid  M(\gamma) > V\}=O_k(T(\log T)^{k^2+1}/V^{2k})$.
\end{lemma}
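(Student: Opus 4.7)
My plan is to bound the integral $\int_{\R}N_f(t)|\zeta(\tfrac12+it)|^{2k}\phi(t/T)\,dt$ in two ways for a suitably chosen pair $(f,\phi)$: from above via the assumed case of Conjecture~\ref{smtco}, and from below by keeping only the contribution of zeros $\gamma\in Z(T)$ with $M(\gamma)>V$. I would first choose an even nonnegative $f$ with $\hat f\in C_c^{\infty}((-\delta,\delta))$ and $f(x)\geq 1$ on a compact interval $[-R,R]$ (for example $f=g^2$ for a real even $g$ with $\hat g\in C_c^{\infty}((-\delta/2,\delta/2))$ and $g(0)$ sufficiently large), together with a nonnegative smooth $\phi$ of compact support in $\R_{>0}$ satisfying $\phi\geq 1$ on $[1,2]$. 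With these choices, the hypothesis of the lemma directly gives
\begin{equation*}
\int_{\R}N_f(t)|\zeta(\tfrac12+it)|^{2k}\phi(t/T)\,dt\ll_{f,\phi,k} T(\log T)^{k^2}.
\end{equation*}

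For the lower bound, since $f,|\zeta|^{2k},\phi\geq 0$ I may retain only the terms of $N_f(t)$ coming from zeros $\gamma\in Z(T)$ with $M(\gamma)>V$. For each such $\gamma$, fix $t_\gamma\in\gamma+I$ with $|\zeta(\tfrac12+it_\gamma)|=M(\gamma)$. For a small constant $c>0$ and $|t-t_\gamma|\leq c/\log T$ one has $|(\gamma-t)/(2\pi/\log T)|\leq R$, so $f(\cdot)\geq 1$, while $\phi(t/T)\geq 1$ except for $O(1)$ boundary zeros whose contribution is negligible. Thus
\begin{equation*}
\int_{\R}N_f(t)|\zeta(\tfrac12+it)|^{2k}\phi(t/T)\,dt\geq \sum_{\substack{\gamma\in Z(T)\\ M(\gamma)>V}}\int_{|t-t_\gamma|\leq c/\log T}|\zeta(\tfrac12+it)|^{2k}\,dt \;+\; O(1).
\end{equation*}

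The crux of the argument is then the local Bernstein-type inequality
\begin{equation*}
\int_{|t-t_\gamma|\leq c/\log T}|\zeta(\tfrac12+it)|^{2k}\,dt\gg_k \frac{M(\gamma)^{2k}}{\log T},
\end{equation*}
which I would derive by approximating $\zeta(\tfrac12+it)^k$ on $[T,2T]$ by a Dirichlet polynomial of length $O(T^k)$ (via the approximate functional equation) and invoking the standard Sobolev-type bound $|P(t_0)|^2\ll \log N\cdot \int_{|t-t_0|\leq 1/\log N}|P(t)|^2\,dt$ for a Dirichlet polynomial $P$ of length $N$. The approximation error is absorbable when $V$ is not too small, and for small $V$ the lemma is already implied by the Riemann--von Mangoldt count $\#Z(T)\ll T\log T$. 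Combining this local inequality with the upper and lower bounds on the integral yields $\#\{\gamma\in Z(T):M(\gamma)>V\}\cdot V^{2k}/\log T\ll_k T(\log T)^{k^2}$, which is the claim. The main obstacle is this last Bernstein-type estimate: bounds of this kind for Dirichlet polynomials are classical, but one must control the approximation error for $\zeta^k$ and the dependence of constants on $k$, in the spirit of the analyses of extreme values of $\zeta$ in~\cite{Sou,AB}.
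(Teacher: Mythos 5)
Your overall strategy is sound and runs parallel to the paper's: both arguments bound the count $\#\{\gamma:M(\gamma)>V\}$ against the weighted one-level density $\int N_f(t)|\zeta(\tfrac12+it)|^{2k}\phi(t/T)\,dt$ and then invoke Conjecture~\ref{smtco} for the upper bound. (The paper organizes this the other way around --- via Rankin's trick, writing the count as $\ll\frac{\log T}{V^{2k}}\int M_2(t)^{2k}N_\chi(t)\,dt$ --- but the content is the same as your lower-bound/Rankin step.) Your minorant construction for $(f,\phi)$ and the observation that $N_f(t)$ automatically carries the multiplicity, so $\int N_f|\zeta|^{2k}\phi\,dt\geq\sum_{\gamma:M(\gamma)>V}\int_{|t-t_\gamma|\leq c/\log T}|\zeta|^{2k}\,dt$, are both fine.

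The gap is in the step you yourself flag as the crux. The inequality
\begin{equation*}
|P(t_0)|^2\ll (\log N)\int_{|t-t_0|\leq 1/\log N}|P(t)|^2\,dt
\end{equation*}
for a Dirichlet polynomial of length $N$ is \emph{not} a known result, and it is false as stated: such Bernstein/Nikolskii-type bounds are either global (over all of $\R$) or local but with an unavoidable derivative term. The correct local estimate (Gallagher's lemma, or equivalently~\cite[Eq.~(6)]{ABBRS}, applied to $F(u)=|\zeta(\tfrac12+it_\gamma+iu)|^{2k}$) reads
\begin{equation*}
M(\gamma)^{2k}\leq \frac{\log T}{2c}\int_{|u|\leq c/\log T}|\zeta(\tfrac12+it_\gamma+iu)|^{2k}\,du+\int_{|u|\leq c/\log T}2k\,|\zeta(\tfrac12+it_\gamma+iu)|^{2k-1}|\zeta'(\tfrac12+it_\gamma+iu)|\,du,
\end{equation*}
and there is no way to discard the second term pointwise: near a large value of $\zeta$, the oscillation $|\zeta|^{2k-1}|\zeta'|$ can be of the same size as $\log T\cdot|\zeta|^{2k}$. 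Intuitively, your proposed bound would say that a band-limited function cannot spike on a scale shorter than the reciprocal bandwidth unless its local $L^2$ mass is proportionally large, which is false without tail control; the only version that avoids a derivative term requires a global $L^\infty$ or $L^2$ bound on $P$, which for $\zeta$ over $[T,2T]$ would cost a factor $\exp(\sqrt{\log T})$ rather than a power of $\log T$.

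To repair the argument you must therefore carry along the derivative term, sum over $\gamma$ (turning $\sum_\gamma\int_{|t-t_\gamma|\leq c/\log T}(\cdot)$ into an integral $\int N_{\chi'}(t)(\cdot)\,dt$ for a suitable indicator $\chi'$ majorised by a band-limited $g$), express $\zeta'$ via Cauchy's integral formula as in~\cite[Lemma~1]{Rad}, apply H\"older, and invoke Conjecture~\ref{smtco} a second time. This is exactly what the paper does; once you include it, your proof is the paper's proof rewritten in the opposite direction. So: right idea, but the ``standard Sobolev-type bound'' you lean on does not exist in the form you need, and the missing derivative term requires the entire second half of the paper's argument.
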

\begin{proof}
We have
\begin{align*}
\sum_{\gamma\in Z(T), M(\gamma)>V}1&=\sum_{\gamma\in Z(T), M(\gamma) >V }\int_{t\in\gamma+I}2\log T\,dt \leq \int_{T-1}^{2T+1} \sum_{\gamma\in t+I, M(\gamma) >V}2\log T\,dt.
\end{align*}
Now, if $\gamma\in t+I$ then $M_2(t)>M(\gamma)$. In particular, using Rankin's trick we obtain
\begin{align*}
\sum_{\gamma\in Z(T), M(\gamma)>V}1 \leq \frac{2\log T}{V^{2k}}\int_{T-1}^{2T+1} \sum_{\gamma\in Z(T)\cap(t+I)}M_2(t)^{2k}\,dt\leq\frac{2\log T}{V^{2k}}\int_{T-1}^{2T+1} M_2(t)^{2k} N_{\chi}(t)\,dt.
\end{align*}
By~\cite[Equation~(6)]{ABBRS} we have
\begin{align*}
M_2(t)^{2k}&\leq \frac{|\zeta(\frac12+it-\frac{ i}{\log T})|^{2k}+|\zeta(\frac12+it+\frac{ i}{\log T})|^{2k}}{2}\\
&\quad+\int_{-1/\log T}^{1/\log T}| k\zeta(\tfrac12+it+iu)^{2k-1}\zeta'(\tfrac12+it+iu)|\,du.
\end{align*}
By Conjecture~\ref{smtco} we have
\begin{align*}
\int_{T-1}^{2T+1}| \zeta(\tfrac12+it\pm  i/\log T)|^{2k} N_{\chi}(t)\,dt&\leq \int_{T-2}^{2T+2}| \zeta(\tfrac12+ it)|^2 N_{g}(t)\,dt\\
&=O(T(\log T)^{k^2}).
\end{align*}
Also, by~\cite[Lemma~1]{Rad} for $|u|\leq1/\log T$ we have
\begin{align*}
\zeta'(\tfrac12+it+iu)&=\frac1{2\pi i }\int_{|w|=\frac2{\log T}} \frac{\zeta(\tfrac12+it+iu+w)}{w^2}\,dw\\
&\ll (\log T)^2 \int_{|w|=\frac2{\log T}}|\zeta(\tfrac12+it+iu+\Re(w))|\,dw.
\end{align*}
Thus, if we let let $g=\eta_{\min(1,\delta)/2}$ (so that $g$ has Fourier support in $(-\delta,\delta)$ and satisfies $\chi\leq g$) we have 
\begin{align*}
&\int_{T-1}^{2T+1} N_{\chi}(t) \int_{-1/\log T}^{1/\log T}| k\zeta(\tfrac12+it-iu)^{2k-1}\zeta'(\tfrac12+it+iu)|\,du\,dt\\
&\quad\ll (\log T)^2 \int_{|w|=\frac2{\log T}}\int_{-1/\log T}^{1/\log T}\int_{T-1}^{2T+1}|\zeta(\tfrac12+it-iu)^{2k-1}\zeta(\tfrac12+it+iu+\Re(w))|N_g(t)\,dt\,du\,dw\\
&\quad\ll T(\log T)^{k^2}
\end{align*}
by H\"older's inequality and Conjecture~\ref{smtco}. Thus $\int_{T-1}^{2T+1} M_2(t)^{2k} N_{\chi}(t)\,dt\ll T(\log T)^{k^2}$ and the lemma follows.
\end{proof}

\begin{lemma}\label{sl}
Assume that Conjecture~\ref{smtco} holds for some $k\in\N$ and all even functions $f$ with Fourier support in $(-\delta,\delta)$ for some $\delta>0$. 
Let $U\geq \frac92 k^3 L/\sqrt{\log L} $ for some large enough $C>0$. Then
$$\#\{\gamma\in Z(T)\mid  M(\gamma) > e^{-U } (\log T)^k \}\gg T (\log T)^{1-k^2}e^{-2k U}.$$
\end{lemma}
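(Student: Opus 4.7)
The plan is to apply the assumed Conjecture~\ref{smtco} to a fixed nonnegative even Schwartz test function $f$ with $\hat f\in C_c^\infty(-\delta,\delta)$ and $f(0)>0$, and to extract a lower bound on $\#\{\gamma:M(\gamma)>V\}$ (where $V:=e^{-U}(\log T)^k$) by a careful upper-bound analysis of the contribution of each individual zero. Setting
\begin{equation*}
F_f(\gamma):=\int_{\R} f\Big(\frac{(\gamma-t)\log T}{2\pi}\Big)|\zeta(\tfrac12+it)|^{2k}\phi(t/T)\,dt,
\end{equation*}
the conjecture yields $\sum_\gamma F_f(\gamma)\sim c_f\,T(\log T)^{k^2}$ for some positive constant $c_f$, while a change of variables plus the trivial estimate $|\zeta(\tfrac12+it)|\le M_\kappa(\gamma)$ on $|t-\gamma|\le\kappa/\log T$ gives $F_f(\gamma)\ll M_\kappa(\gamma)^{2k}/\log T$ after absorbing the Schwartz tail. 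If one can show that the mass of $\sum_\gamma F_f(\gamma)$ is, up to a $o(T(\log T)^{k^2})$ error, supported on zeros with $M(\gamma)\le V':=e^U(\log T)^k$, then
\begin{equation*}
T(\log T)^{k^2}\ll \#\{\gamma:M(\gamma)>V\}\cdot (V')^{2k}/\log T,
\end{equation*}
yielding the desired $\#\{\gamma:M(\gamma)>V\}\gg T(\log T)^{1-k^2}e^{-2kU}$.

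I would implement this in three steps. First, I would truncate $F_f(\gamma)$ to the near range $|t-\gamma|\le\kappa/\log T$ for a slowly growing parameter $\kappa$ (for example $\kappa=(\log L)^{10}$), using the rapid decay of $f$ together with the twisted moment estimates in Propositions~\ref{TwSM} and~\ref{p4m} to show that the tail is negligible after summation in $\gamma$; on the near part one has the clean bound $F_f(\gamma)\ll M_\kappa(\gamma)^{2k}/\log T$. Second, I would handle zeros with $M(\gamma)>V'$ by a dyadic decomposition: Lemma~\ref{fl} controls the counts $\#\{\gamma:M(\gamma)>V_j\}$ for moderate thresholds $V_j$, and in the regime where the polynomial bound of Lemma~\ref{fl} becomes insufficient, it is upgraded to the Gaussian tail $T\exp(-(kL+W)^2/L)\cdot(L/\log L)\log T$ obtained by combining Soundararajan's inequality (under RH) on $|\{t:\log|\zeta(\tfrac12+it)|>kL+W\}|$ with Lemma~\ref{bfn}'s bound $N_\chi(t)\ll L/\log L$. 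Third, the contribution of $\{M(\gamma)\le V\}$ is bounded by integrating Lemma~\ref{fl} dyadically over thresholds $v\le V$ rather than using the lossy trivial bound $V^{2k}T$; the resulting estimate of order $T(\log T)^{k^2}(U+kL)$ gets absorbed into the main term once the hypothesis on $U$ is used.

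The main obstacle is the delicate interpolation in the second step: Lemma~\ref{fl} provides only a polynomial bound, while Soundararajan's Gaussian tail only takes over for $v$ considerably larger than $(\log T)^k$, so neither estimate is sharp in the full range. The threshold $U\ge \tfrac 92 k^3 L/\sqrt{\log L}$ is precisely dictated by this trade-off: it is the smallest value for which the Gaussian factor $\exp(-U^2/L)$ dominates the polynomial factors from $V_j^{2k}$ and the logarithmic slack $L/\log L$ inherited from Lemma~\ref{bfn}. Calibrating $\kappa$, the dyadic step-sizes, and the cutoff between the polynomial and the Gaussian regimes so that all error terms are simultaneously $o(T(\log T)^{k^2})$ constitutes the technical core of the argument.
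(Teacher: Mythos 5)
Your plan works with the zero-sum $\sum_\gamma F_f(\gamma)$ and attempts to localize its mass to zeros with $M(\gamma)\in(V,V']$; this requires controlling both the large--$M$ tail (your step 2) and, crucially, the contribution of small $M(\gamma)$ (your step 3). Step 3 as described does not close, and I see no way to repair it within your framework. First, your bound $F_f(\gamma)\ll M_\kappa(\gamma)^{2k}/\log T$ involves the maximum over the \emph{longer} interval $\gamma+I_\kappa$, so zeros with $M(\gamma)\le V$ but $M_\kappa(\gamma)$ large (which occur near spikes of $\zeta$) are not constrained at all by the hypothesis $M(\gamma)\le V$. Second, even granting $F_f(\gamma)\ll M(\gamma)^{2k}/\log T$, your dyadic application of Lemma~\ref{fl} gains nothing: zeros with $M(\gamma)\in(v,2v]$ number $O(T(\log T)^{k^2+1}v^{-2k})$ and contribute $\ll v^{2k}/\log T$ each, so every dyadic scale produces $\asymp T(\log T)^{k^2}$, the full size of the main term; summing over $\asymp L$ scales gives $T(\log T)^{k^2}L$, which is \emph{larger} than the main term, not smaller --- the quantity $T(\log T)^{k^2}(U+kL)$ you write down cannot ``be absorbed''. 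And the naive bound $\#Z(T)\cdot V^{2k}/\log T\ll T e^{-2kU}(\log T)^{2k^2}$ would require $U\gtrsim kL/2$, far beyond the stated threshold $\asymp k^3L/\sqrt{\log L}$.

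The paper sidesteps this by localizing in $t$ rather than in $\gamma$. Setting $J:=\{t\in[T,2T]:\ e^{kL-U}<|\zeta(\tfrac12+it)|\le e^{kL+U}\}$, any zero $\gamma\in t+I$ with $t\in J$ automatically has $M(\gamma)\ge|\zeta(\tfrac12+it)|>e^{kL-U}=V$, so one gets the chain
\begin{equation*}
\#\{\gamma\in Z(T): M(\gamma)>V\}\ \geq\ \int_J N_\chi(t)\,\frac{\log T}{2}\,dt\ \geq\ \frac{\log T}{2e^{2k(kL+U)}}\int_J |\zeta(\tfrac12+it)|^{2k}N_\chi(t)\,dt,
\end{equation*}
the last step inserting $1\ge|\zeta|^{2k}e^{-2k(kL+U)}$, valid on $J$. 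It then suffices to show $\int_{[T,2T]\setminus J}|\zeta|^{2k}N_\chi\,dt=o(T(\log T)^{k^2})$, which is done by first discarding the set where $N_\chi$ is large (Lemma~\ref{bfn}) and then applying Soundararajan's measure bound $\meas(J_u)\ll TL^{-1/2}e^{-(u+kL)^2/L+19k^3L/\log L}$ on the $|\zeta|$-deviating range; the condition $U\ge\tfrac92 k^3L/\sqrt{\log L}$ is precisely what makes that piece negligible. This switch from decomposing the zero-sum to decomposing the $t$-integral is the key idea absent from your proposal; without it, the small-$M(\gamma)$ regime cannot be controlled at the claimed threshold.
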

\begin{proof}
For $u\in\R$, we let $J_u:=\{t\in[T,2T]\mid |\zeta(\tfrac12+it)|> e^{u+ kL}\}$. Also, we let $J=J_{-U}\cap J_{U}^{c}$, where 
the complement is taken in $[T,2T].$
We have
\begin{align*}
\sum_{\gamma\in Z(T), M(\gamma)>e^{kL-U}}1&\geq  \sum_{\gamma\in Z(T), M(\gamma) >e^{kL-U} }\int_{(\gamma+I)\cap J}\frac{\log T}2\,dt = \int_{J} \frac{\log T}2 \sum_{\gamma\in t+I, M(\gamma) >e^{kL-U}}1\,dt.
\end{align*}
The condition $M(\gamma)>e^{kL-U}$ in the sum over the zeros can  be dropped since it is implied by the remaining conditions and the sum is simply $N_{\chi}(t)$. Thus,
\begin{align}\label{fad}
\sum_{\gamma\in Z(T), M(\gamma)>e^{kL-U}}1&\geq \int_{J} N_{\chi}(t) \frac{\log T}2\,dt\geq \frac{\log T}{2e^{2k(kL+U)}}\int_{J} |\zeta(\tfrac12+it)|^{2k} N_{\chi}(t) \,dt.
\end{align}
We wish to show that we can extend the integral to the full domain $[T,2T]$. We let $Y:=\{t\in[T,2T]\mid N_{\chi}(t)>  A\frac{L}{\log L}\}$ where we fix a sufficiently large $A$ so that $\meas(Y)=O(T/(\log T)^{8k^2})$, by Lemma~\ref{bfn}. Then, 
\begin{align}\label{exin}
\int_{[T,2T]\setminus J}& |\zeta(\tfrac12+it)|^{2k} N_{\chi}(t) \,dt\ll
\int_{Y} |\zeta(\tfrac12+it)|^{2k} N_{\chi}(t) \,dt+\frac{L}{\log L}\int_{[T,2T]\setminus J} |\zeta(\tfrac12+it)|^{2k}  \,dt.
\end{align}
By the H\"older inequality the first integral on the right is bounded by
\begin{align*}
 \bigg(\int_{T}^{2T} |\zeta(\tfrac12+it)|^{4k}\,dt\bigg)^{1/2}\bigg(\int_{T}^{2T} N_{\chi}(t)^4\,dt\bigg)^{1/4}\meas(Y)^{1/4}\ll T,
\end{align*}
since the bound $\int_{T}^{2T} N_{\chi}(t)^4\,dt\ll T$ is implicit in the proof Lemma~\ref{bfn}. The second integral on the right of~\eqref{exin} can be bounded as in~\cite[Proof of Corollary 1.2]{AB}. More specifically, we observe that the contribution of the integral over the domain $J_{-2kL/3}^c$ is trivially $O(T(\log T)^{2k^2/3})$. Also, by~\cite{Sou} we have $\meas(J_u)\ll TL^{-1/2}e^{-(u+kL)^2/L+19 k^3 L/\log L}$ for $|u|\leq \frac34 kL$ and thus applying the Cauchy-Schwarz inequality, the contribution of the integral over $J_{2kL/3}$ is $O(T(\log T)^{2k^2/3})$. Thus,
\begin{align*}
\frac1{T(\log T)^{k^2}}\int_{[T,2T]\setminus J} |\zeta(\tfrac12+it)|^{2k}  \,dt&\leq \sum_{u\in\Z\atop U-1\leq |u| \leq \frac23kL+1}e^{2ku+k^2L}\meas(J_{u-1}\setminus J_{u})+O((\log T)^{-k^2/3})\\
&\ll \sum_{|u| \geq U-1} L^{-1/2}e^{-u^2/L+19 k^3 L/\log L}+O((\log T)^{-k^2/3})\\
&\ll \sqrt{L/U^2}  e^{-U^2/L+19 k^3 L/\log L}+ O((\log T)^{-k^2/3})=o(\log L/L)
\end{align*}
for $U\geq \frac92  k^3 L/\sqrt{\log L} $. Thus by~\eqref{exin} and Conjecture~\ref{smtco} we have
\begin{align*}
\int_{J} |\zeta(\tfrac12+it)|^2 N_{\chi}(t) \,dt=\int_{T}^{2T} |\zeta(\tfrac12+it)|^2 N_{\chi}(t) \,dt+o(T(\log T)^{k^2})\gg T(\log T)^{k^2}.
\end{align*}
The result then follows by~\eqref{fad}.
\end{proof}
\begin{remark}\label{dad}
By~\cite{AB}, for $|u|\leq \frac34 L$, $k=1$, we have the stronger bound $\meas(J_u)\ll L^{-1/2}e^{-(u+L)^2/L}$. In particular, for $k=1$ the same proof gives the same condition under the weaker hypothesis $U\geq \sqrt{L\log L}(1-1/\log L)$ which leads to the stronger result stated after Corollary~\ref{appltc}.
\end{remark}

\begin{corol}
Assume that Conjecture~\ref{smtco} holds for some $k\in\N$ and all even functions $f$ with Fourier support in $(-\delta,\delta)$ for some $\delta>0$. Then, for  $U\geq 5 k^3 L/\sqrt{\log L} $   we have
$$e^{-2kU}\ll \frac{\#\{\gamma\in Z(T)\mid  -U< \log(M(\gamma))-kL<U\}}{T(\log T)^{1-k^2}}\ll e^{2kU}.$$
\end{corol}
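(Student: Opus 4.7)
The plan is to deduce the corollary immediately from Lemmas~\ref{fl} and~\ref{sl}, since the two-sided count appearing in its statement can be bracketed between one-sided tail counts $\mathcal N(V):=\#\{\gamma\in Z(T)\mid M(\gamma)>V\}$ to which these two lemmas apply directly. All the genuine analytic content is already packaged in those lemmas; my remaining task is just to line up the bounds and match the implicit constants.

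For the upper bound, I would note that $\#\{\gamma\in Z(T)\mid -U<\log M(\gamma)-kL<U\}\leq \mathcal N(e^{kL-U})$ and apply Lemma~\ref{fl} with $V:=e^{kL-U}=(\log T)^k e^{-U}$. Since $V^{2k}=(\log T)^{2k^2}e^{-2kU}$, the lemma delivers $\mathcal N(e^{kL-U})\ll T(\log T)^{k^2+1}/V^{2k}=T(\log T)^{1-k^2}e^{2kU}$ with no further work.

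The lower bound is marginally more delicate because Lemmas~\ref{sl} and~\ref{fl} furnish lower and upper bounds of the \emph{same} order $T(\log T)^{1-k^2}e^{-2kU}$, so a naive difference $\mathcal N(e^{kL-U})-\mathcal N(e^{kL+U})$ need not be positive. My plan is to insert a small shift: write the target count as $\mathcal N(e^{kL-U'})-\mathcal N(e^{kL+U})$ with $U':=U-C$, where $C=C(k)>0$ is an absolute constant to be chosen. Since $k^3L/\sqrt{\log L}\to\infty$, the hypothesis $U\geq 5k^3L/\sqrt{\log L}$ guarantees $U'\geq\tfrac{9}{2}k^3L/\sqrt{\log L}$ for all sufficiently large $T$, so Lemma~\ref{sl} is applicable at $U'$ and gives $\mathcal N(e^{kL-U'})\geq c_2 e^{2kC}\,T(\log T)^{1-k^2}e^{-2kU}$, while Lemma~\ref{fl} with $V=e^{kL+U}$ gives $\mathcal N(e^{kL+U})\leq c_1 T(\log T)^{1-k^2}e^{-2kU}$. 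Choosing $C$ so that $c_2 e^{2kC}\geq 2c_1$, the difference exceeds $c_1 T(\log T)^{1-k^2}e^{-2kU}$, and since $U'\leq U$ this subcount lies inside the set counted by the corollary.

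In short, the only point that requires any thought is the constant-matching via the shift $C$, which I expect to be the hardest step of an otherwise entirely bookkeeping argument. Given the flexibility provided by the gap between $5k^3L/\sqrt{\log L}$ in the corollary and $\tfrac92 k^3L/\sqrt{\log L}$ in Lemma~\ref{sl}, there is no serious obstacle.
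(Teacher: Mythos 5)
Your argument is correct and is essentially the paper's proof: bound the two-sided count below by a lower-tail count at a slightly smaller threshold, subtract the upper-tail count, and invoke Lemmas~\ref{sl} and~\ref{fl} for the two pieces. The only difference is cosmetic: instead of shifting by a constant $C$ and tuning it against the implied constants, the paper sets the lower threshold directly to $V=\tfrac92 k^3 L/\sqrt{\log L}$, the minimum permitted by Lemma~\ref{sl}, so that $U-V\to\infty$ makes the subtracted tail $e^{-2kU}$ negligible compared to the main term $e^{-2kV}$ and no constant-matching is needed at all.
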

\begin{proof}
For $0\leq U_1,U_2\leq \infty$, let $S(U_1,U_2):=\#\{\gamma\in Z(T)\mid  -U_1< \log(M(\gamma))-kL<U_2\}$. Also, let $V= \frac92 k^3 L/\sqrt{\log L} $. Then, we have 
\begin{align*}
S(U,U)\geq S(V,U)&=S(V,\infty )+O(Te^{(1-k^2)L-2kU})\gg T (\log T)^{1-k^2}e^{-2k V} 
\end{align*} 
 by Lemma~\ref{fl} and Lemma~\ref{sl}. Finally, $S(U,U)\leq S(\infty,U)\ll Te^{(1-k^2)L+2kU}$ by Lemma~\ref{fl}.
\end{proof}

\section{The twisted second moment}\label{S2}
In this section we prove Proposition~\ref{TwSM}.  For $n\in\N$ and $\alpha,\beta\in\C$ with $|\alpha|,|\beta|\ll 1/\log T$, let 
\begin{equation}\label{5genn.0}
I_n:=\int_{\R} n^{it}\zeta(1/2+it+\alpha)\zeta(1/2-it+\beta)\phi\Big(\frac{t}{T}\Big)\,dt.
\end{equation}
Also, let  $G_{\alpha,\beta}(w,t):=e^{w^2}\frac{(\frac14-(w+it+\alpha)^2)(\frac14-(w-it+\beta)^2)}{(w+it+\alpha)^2(w-it+\beta)^2}$
and fix a small $\delta>0$.

First of all we note that if $n$ is substantially larger than $T$, say $n>T^{1+2\delta}$, then $I_n=O(T^{-C})=F_\phi(\frac{T}{2\pi n})+O(T^{-C})$ for any $C>0$. Indeed, for $t\asymp T$ we can write (see \cite[Theorem 5.3]{I-K}) 
\begin{align}\label{afe1}
\zeta(\tfrac12+it+\alpha)\zeta(\tfrac12-it+\beta)
&=\mathcal S_{\alpha,\beta}(t)+\mathcal X_{\alpha,\beta}(t)\mathcal S_{-\beta,-\alpha}(t)+O(T^{-C-1}),
\end{align}
where
\begin{align}\label{slad}
\mathcal S_{\alpha,\beta}(t)
&=\sum_{m_1m_2<T^{1+\delta}}\frac{1}{2\pi i}\int_{(3/2)} \frac{(m_2/m_1)^{it}m_1^{-\alpha}m_2^{-\beta}g_{\alpha,\beta}(w,t)G_{\alpha,\beta}(w)}{\sqrt{m_1m_2}(\pi m_1m_2)^w }\frac{dw}{w}
\end{align}
and
\begin{align*}
g_{\alpha,\beta}(w,t):=\frac{\Gamma\big(\frac14+\frac{w+it+\alpha}{2}\big)\Gamma\big(\frac14+\frac{w-it+\beta}{2}\big)}{\Gamma\big(\frac14+\frac{it+\alpha}{2}\big)\Gamma\big(\frac14+\frac{-it+\beta}{2}\big)},\qquad \mathcal X_{\alpha,\beta}(t):=\frac{\Gamma\big(\frac14-\frac{it+\alpha}{2}\big)\Gamma\big(\frac14-\frac{-it+\beta}{2}\big)}{\Gamma\big(\frac14+\frac{it+\alpha}{2}\big)\Gamma\big(\frac14+\frac{-it+\beta}{2}\big)}\pi^{\alpha+\beta}.
\end{align*}
We insert~\eqref{afe1}-\eqref{slad} into~\eqref{5genn.0} and repeatedly integrate by parts with respect to $t$. We then deduce $I_n=O(T^{-C})$ for any $C>0$ since  $|\log (\frac{m_2n}{m_1})|\gg \log T>1$ and $\partial_{t}^{(j)}g_{\alpha,\beta}(w,t)\ll_j |t|^{\Re(w)-j}(1+|w|^{j+1})$ for $\Re(w)\ll1$, by Stirling's formula. 

As in~\cite[Lemma 3]{LiRadz} we approximate $G_{\alpha,\beta}$ and $g_{\alpha,\beta}$ at first order simplifying $\mathcal S_{\alpha,\beta}(t)$ to
\begin{equation}\notag
\mathcal S_{\alpha,\beta}(t)=\sum_{m_1m_2<T^{1+\delta}}\frac{m_1^{-\alpha}m_2^{-\beta}}{\sqrt{m_1m_2}}\left(\frac{m_2}{m_1}\right)^{it}W\left(\frac{2\pi m_1m_2}{t}\right)+O(T^{-2/3})
\end{equation}
with
$$W(x):=\frac{1}{2\pi i}\int_{(3/2)}x^{-w}e^{w^2}\frac{dw}{w}$$ 
satisfying $W^{(j)}(x)\ll \min(1,x^{-C})$ for any $x>0$ and any fixed $C>0$, $j\in\N$. 
Thus, 
\begin{equation*}
\int_{\R}n^{it}\mathcal S_{\alpha,\beta}(t)\phi\Big(\frac{t}{T}\Big)\,dt =\sum_{m_1m_2<T^{1+\delta}}\frac{m_1^{-\alpha}m_2^{-\beta}}{\sqrt{m_1m_2}}\int_{\R} W\left(\frac{2\pi m_1m_2}{t}\right)\left(\frac{nm_2}{m_1}\right)^{it}\phi\Big(\frac{t}{T}\Big)\,dt+O(T^{1/3}).
\end{equation*}
The contribution of the diagonal terms, with $nm_2=m_1$, is
\begin{align}
\mathcal D
&=\frac{1}{ n^{\frac12+\alpha}}\sum_{m<T^{1+\delta}}\frac{1}{m^{1+\alpha+\beta}}\int_{\R}W\left(\frac{2\pi m^2n}{t}\right)\phi\Big(\frac{t}{T}\Big)\,dt\notag\\
&=\frac{1}{ n^{\frac12+\alpha}}\frac{1}{2\pi i}\int_{(3/2)}\frac{e^{w^2}}{w}\left(\frac{1}{2\pi n}\right)^w\zeta(1+2w+\alpha+\beta)\int_{\R}t^w\phi\Big(\frac{t}{T}\Big)\,dt\,dw+O(1)\notag\\
&=\frac{1}{ n^{\frac12+\alpha}}\frac{T}{2\pi i}\int_{(3/2)}\frac{e^{w^2}}{w}\left(\frac{T}{2\pi n}\right)^w\zeta(1+2w+\alpha+\beta)\tilde\phi(w+1)\,dt+O(1).\label{Des}
\end{align}
Next, we consider the off-diagonal terms
\begin{equation*}
\mathcal O =2\sum_{\substack{m_1m_2<T^{1+\delta} \\ nm_1-m_2\neq 0}}\frac{m_1^{-\alpha}m_2^{-\beta}}{\sqrt{m_1m_2}}\int_{\R}W\left(\frac{2\pi m_1m_2}{t}\right)\left(\frac{nm_2}{m_1}\right)^{it}\phi\Big(\frac{t}{T}\Big)\,dt.
\end{equation*}
We  denote $\Delta:=nm_2-m_1$ and observe that the contribution from $|\Delta|>m_2nT^{-1+2\delta}$ is negligible, since one can integrate by parts with respect to $t$ as above.
 Therefore we can assume $|\Delta| \leq nm_2T^{-1+2\delta}$. We have $m_1=nm_2(1-\frac{\Delta}{nm_2})$ and thus, by Taylor approximation,
\begin{equation}\begin{split}\notag
\frac{1}{m_1^{1/2+\alpha}}&=\frac{1}{(nm_2)^{1/2+\alpha}}\left(1+O\left(\frac{1}{T^{1-2\delta}}\right)\right) \\
\left(\frac{nm_2}{m_1}\right)^{it} &= e^{it\frac{\Delta}{nm_2}}\left(1+O\left(\frac{1}{T^{1-2\delta}}\right)\right)  \\
W\left(\frac{2\pi m_1m_2}{t}\right)&=W\left(\frac{2\pi nm_2^2}{t}\right)+O\left(\frac{1}{T^{1-2\delta}}\right).
\end{split}\end{equation}
Thus, writing $m_1$ in terms of $m_2$ and $\Delta$ we obtain
$$
\mathcal O= \frac{1}{n^{1/2+\alpha}}\sum_{m=1}^{\infty}\frac{1}{m^{1+\alpha+\beta}}\sum_{\Delta\in\Z_{\neq 0}}\int_{-\infty}^{+\infty}W\left(\frac{2\pi nm^2}{t}\right)e^{it\frac{\Delta}{mn}}\phi\Big(\frac{t}{T}\Big)\,dt + O(T^{1/2+6\delta}),
$$
where we could extend back the sums over $\Delta$ and $m$ by integration by parts and by the decay of $W$, respectively. We denote the main term above by $\mathcal O'$. We bypass any issue of convergence with a double integration by parts. We get
\begin{equation*}
\mathcal O'
=\frac{-n^2}{n^{1/2+\alpha}}\sum_{m=1}^{\infty}\sum_{\Delta\in\Z_{\neq 0}}\frac{m^{1-\alpha-\beta}}{\Delta^2}\int_{0}^{+\infty}e^{it\frac{\Delta}{mn}}\frac{d^2}{dt^2}\left(W\left(\frac{2\pi nm^2}{t}\right)\phi\Big(\frac{t}{T}\Big)\right)\,dt .
\end{equation*}
Writing $W$ in terms of its Mellin transform and using $e^{ix}+e^{-ix}=2\cos(x)$, the above becomes
\begin{equation}\begin{split}\notag
\frac{-2n^2}{n^{1/2+\alpha}}\int_{0}^{+\infty}\frac{1}{2\pi i}\int_{(\frac{3}{2})}\frac{e^{w^2}}{w}\sum_{m=1}^{\infty}\sum_{\Delta=1}^{\infty}\frac{m^{1-\alpha-\beta}}{\Delta^2}\cos\left(\frac{t\Delta}{mn}\right)(2\pi nm^2)^{-w}\frac{d^2}{dt^2}\left(t^w\phi\Big(\frac{t}{T}\Big)\right)dw\,dt,
\end{split}\end{equation}
where we could exchange the integrals and the sums, as they converge absolutely. We make the change of variable $\frac{t\Delta}{mn}\to t$ and write
\begin{equation}\begin{split}\notag
\mathcal O'
&=\lim_{\ell\to\infty}\mathcal O'_\ell,
\end{split}\end{equation}
where $\mathcal O'_\ell$ denotes the same expression, but with the integral over $t$ truncated at $\pi(\ell+\frac{1}{2})$ with $\ell\in\N$. In particular,  opening also $\phi$ as a Mellin integral, we get
\begin{equation}\begin{split}\notag
\mathcal O_\ell'
=\frac{-2n}{n^{1/2+\alpha}}\int_{0}^{\pi(\ell+\frac{1}{2})}\frac{1}{2\pi i}\int_{(\frac{3}{2})}\frac{e^{w^2}}{w}&(2\pi)^{-w}\frac{1}{2\pi i}\int_{(-\frac{1}{4})}\tilde\phi(s) T^sn^{-s} \\
& \times\sum_{m=1}^{\infty}\sum_{\Delta=1}^{\infty}\frac{\cos t}{m^{w+s+\alpha+\beta}\Delta^{1+w-s}}\frac{d^2}{dt^2}\left(t^{w-s}\right)ds\,dw\,dt .
\end{split}\end{equation}
By integration by parts, for $w$ and $s$ in the above lines of integration we have 
$$ \int_{0}^{\pi(\ell+\frac{1}{2})}t^{w-s-2}\cos t\,dt = \int_{0}^{\pi(\ell+\frac{1}{2})}\frac{t^{w-s-1}}{w-s-1}\sin t\,dt, $$
so that 
\begin{equation}\begin{split}\notag
\mathcal O_\ell'
&= \frac{-2T}{n^{1/2+\alpha}}\frac{1}{(2\pi i)^2}\int_{(\frac{3}{2})}\int_{(-\frac{1}{4})}\frac{e^{w^2}}{w}(2\pi)^{-w}\tilde\phi(s) \left(\frac{T}{n}\right)^{s-1} \\
&\hspace{2.5cm} \times \zeta(w+s+\alpha+\beta)\zeta(1+w-s)(w-s)\int_{0}^{\pi(\ell+\frac{1}{2})}t^{w-s-1}\sin t\,dt\; ds\,dw. 
\end{split}\end{equation}
We shift the integral over $s$ to $\Re(s)=2$. Notice that in doing so we do not encounter any poles as the singularity of $\zeta(1+w-s)$ is cancelled by the factor $(w-s)$. At this point, the integral over $t$ converges absolutely and so bringing the limit inside we obtain
\begin{equation}\begin{split}\notag
\mathcal O'
&= \frac{-2T}{n^{1/2+\alpha}}\frac{1}{(2\pi i)^2}\int_{(\frac{3}{2})}\int_{(2)}\frac{e^{w^2}}{w}(2\pi)^{-w}\tilde\phi(s) \left(\frac{T}{n}\right)^{s-1}\zeta(w+s+\alpha+\beta)\zeta(1+w-s) \\
&\hspace{4cm} \times(w-s)\int_{0}^{\infty}t^{w-s-1}\sin t\,dt\; ds\,dw.
\end{split}\end{equation}
Being $\Re(1+w-s)=\frac{1}{2}\in(0,1)$,  by~\cite[eq. (3.381.5)]{GR} we have
\begin{align*}
-(w-s)\int_{0}^{\infty}t^{w-s-1}\sin t\,dt&= \int_0^{+\infty} t^{(1+w-s)-1} \cos t \,dt \\
&= \Gamma(1+w-s)\cos\left( \frac{\pi}{2}(1+w-s) \right).
\end{align*}
Thus,
\begin{equation}\begin{split}\notag
\mathcal O'
&= \frac{2T}{n^{1/2+\alpha}}\frac{1}{(2\pi i)^2}\int_{(\frac{3}{2})}\int_{(2)}\frac{e^{w^2}}{w}(2\pi)^{-w}\tilde\phi(s) \left(\frac{T}{n}\right)^{s-1} \\
&\hspace{2.5cm} \zeta(w+s+\alpha+\beta)\zeta(1+w-s)\Gamma(1+w-s)\cos\left(\frac{\pi}{2}(1+w-s)\right) ds\,dw\\
&= \frac{T}{n^{1/2+\alpha}}\frac{1}{(2\pi i)^2}\int_{(\frac{3}{2})}\int_{(2)}\frac{e^{w^2}}{w}\tilde\phi(s)\left(\frac{T}{2\pi n}\right)^{s-1} 
\zeta(w+s+\alpha+\beta)\zeta(s-w)ds\,dw,
\end{split}\end{equation}
by the functional equation.
Finally, we shift the integral over $s$ to the line $\Re(s)=3$. The contribution of minus the residue at $s=1+w$ is 
\begin{equation}\begin{split}\notag 
-\frac{T}{n^{1/2+\alpha}}\frac{1}{2\pi i}\int_{(\frac{3}{2})}\frac{e^{w^2}}{w}\tilde\phi(1+w)\left(\frac{T}{2\pi n}\right)^{w} 
\zeta(1+2w+\alpha+\beta)dw  =-\mathcal D+O(1)
\end{split}\end{equation}
with $\mathcal D$ as in~\eqref{Des}. Therefore, we have
\begin{equation*}
\mathcal O
= \frac{T}{n^{1/2+\alpha}}\frac{1}{(2\pi i)^2}\int_{(\frac{3}{2})}\int_{(3)}\frac{e^{w^2}}{w}\tilde\phi(s)\left(\frac{T}{2\pi n}\right)^{s-1} 
\zeta(w+s+\alpha+\beta)\zeta(s-w)ds\,dw - \mathcal D + O(T^{1/2+6\delta}).
\end{equation*}
Thus, after a change of variable in $s$, we obtain
\begin{equation}
\begin{split}
\int_{\R}n^{it}\mathcal S_{\alpha,\beta}(t)\phi\Big(\frac{t}{T}\Big)\,dt &=T\frac{(T/2\pi)^{-\frac{\alpha+\beta}2}}{n^{\frac{1+\alpha-\beta}{2}}}\frac{1}{(2\pi i)^2}\int_{(\frac{3}{2})}\int_{(3)}\frac{e^{w^2}}{w}\tilde\phi(s-\tfrac{\alpha+\beta}2)\left(\frac{T}{2\pi n}\right)^{s-1} \\
&\hspace{5em}\times\zeta(s+\tfrac{\alpha+\beta}2+w)\zeta(s-\tfrac{\alpha+\beta}2-w)ds\,dw 
+ O(T^{1/2+6\delta}).
\end{split}\label{fact}
\end{equation}
Now, we have $\mathcal X_{\alpha,\beta}(t)=(t/2\pi)^{-\alpha-\beta}(1+O(1/T))$ and thus by the above computation we find
\begin{align*}
&\int_{\R}n^{it}\mathcal S_{-\beta,-\alpha}(t)\mathcal X_{\alpha,\beta}(t)\phi\Big(\frac{t}{T}\Big)\,dt =(T/2)^{-\alpha-\beta}\int_{\R}n^{it}\mathcal S_{-\beta,-\alpha}(t)\mathcal (t/T)^{-\alpha-\beta}\phi\Big(\frac{t}{T}\Big)\,dt+O(1)\\
&\hspace{5em}=T\frac{(T/2\pi)^{-\frac{\alpha+\beta}2}}{n^{\frac{1+\alpha-\beta}{2}}}\frac{1}{(2\pi i)^2}\int_{(\frac{3}{2})}\int_{(3)}\frac{e^{w^2}}{w}\tilde\phi(s-\tfrac{\alpha+\beta}2)\left(\frac{T}{2\pi n}\right)^{s-1} \\
&\hspace{12em}\times\zeta(s-\tfrac{\alpha+\beta}2+w)\zeta(s+\tfrac{\alpha+\beta}2-w)ds\,dw + O(T^{1/2+6\delta}).
\end{align*}
Making the change of variable $w\mapsto -w$ and summing with~\eqref{fact} we obtain
\begin{equation*}
\begin{split}
I_n &=\frac{T}{n^{\frac{1}2+\alpha}}\frac{1}{2\pi i}\int_{(3)}\tilde\phi(s-\tfrac{\alpha+\beta}2)\left(\frac{T}{2\pi n}\right)^{s-\frac{\alpha+\beta}2-1} \zeta(s+\tfrac{\alpha+\beta}2)\zeta(s-\tfrac{\alpha+\beta}2)ds 
+ O(T^{1/2+6\delta}).
\end{split}
\end{equation*}
 by~\eqref{afe1} and the residue theorem. Proposition~\ref{TwSM} then follows by taking $\delta<\eps/6$.

\section{The twisted fourth moment}\label{SL}
Proposition~\ref{p4m}  follows as in~\cite{BBLR} from the following refinement of Theorem~1.3 of~\cite{BBLR}.

\begin{prop}\label{qdpt}
Let $A,B,X,Z,T\geq 1$ with $Z>XT^{-\eps}$ and $\log(ABXZ)\ll \log T$. Let $\boldsymbol\alpha_a,\boldsymbol\beta_b$ be sequences of complex numbers supported on $[1,A]$ and $[1,B]$, respectively, and such that $\boldsymbol\alpha_a\ll A^\eps, \boldsymbol\beta_b\ll B^\eps$. Let $f\in\mathcal C^{\infty}(\R_{\geq0}^{3})$ and $K\in\mathcal C^{\infty}(\R_{\geq0})$  be such that
\begin{equation*}
\frac{\partial ^{i+j+k}}{\partial x^i\partial y^j\partial h^k}f(x,y,h)\ll_{i,j,k,r} T^{\eps}(1+x)^{-i}(1+y)^{-j}(1+h)^{-k}(1+h^2Z^2/(xy))^{-r}
\end{equation*}
and $K^{(j)}(x)\ll_{j,r} T^\eps (1+x)^{-j}(1+x/X^2)^{-r}$ for any $i,j,k,r\geq0$. Then,  
\begin{align*}
& \sum_{a,b,m_1,m_2,n_1,n_2,h>0\atop a m_1 m_2 - b n_1 n_2 = h } \frac{\boldsymbol\alpha_a \overline{\boldsymbol\beta_b}}{m_1^{\alpha} m_2^{\beta} n_1^{\gamma} n_2^{\delta}} 
 f(a m_1 m_2, b n_1 n_2, h) K(m_1 m_2 n_1 n_2)\\
&\hspace{15em}= \mathcal{M}_{\alpha, \beta, \gamma, \delta} + \mathcal{M}_{\beta, \alpha, \gamma, \delta} + \mathcal{M}_{\alpha, \beta, \delta, \gamma} + \mathcal{M}_{\beta, \alpha, \delta, \gamma} + \mathcal{E}
\end{align*}
where $\mathcal M$ is as in~\cite[p.~21]{BBLR} and $\mathcal E \ll T^\eps(AB)^\frac12 XZ^{-\frac12}\Big(AB + (ABX)^{\frac58}Z^{-\frac12}+(AB)^{\frac12}XZ^{-\frac 34}\Big).$
\end{prop}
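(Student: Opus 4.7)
The plan is to follow the framework of~\cite[Theorem~1.3]{BBLR} with a sharper treatment of the off-diagonal terms. As in that reference, the argument separates into a detection stage producing the main terms, a Voronoi stage that dualizes the inner divisor sums, and a final estimation stage for the error.

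First, fixing $a$ and $b$, I would use the DFI $\delta$-symbol method to detect the condition $a m_1 m_2 - b n_1 n_2 = h$, writing it as an averaged sum of additive characters modulo auxiliary moduli $q$. The $h$-sum, evaluated against the smooth weight $f(\cdot,\cdot,h)$, is effectively truncated at $|h| \ll \sqrt{xy}/Z$ by the decay hypothesis on $f$, and the hypothesis $Z>XT^{-\eps}$ makes this truncation genuinely shorter than $X^2/Z^0$. The main terms $\mathcal M_{\sigma,\tau,\rho,\omega}$, with $(\sigma,\tau,\rho,\omega)$ running through the four pairings $(\alpha,\beta,\gamma,\delta),(\beta,\alpha,\gamma,\delta),(\alpha,\beta,\delta,\gamma),(\beta,\alpha,\delta,\gamma)$, emerge from the contribution of small $q$, or equivalently from the zero-frequency after Poisson/Voronoi on~$h$.

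Second, on the off-diagonal I would apply Voronoi summation to the pairs $(m_1,m_2)$ and $(n_1,n_2)$ separately, modulo $aq$ and $bq$. Using the bounds on $f$ and $K$ one truncates the dual variables, and the remaining expression is a weighted sum of Kloosterman sums of the shape $S(\overline{a}h,\cdot\,;aq)$ against Bessel-type kernels, together with an analogous factor from the $n$-side. The smoothness bound $K^{(j)}(x)\ll T^\eps(1+x)^{-j}(1+x/X^2)^{-r}$ is what controls the size of the dual weights.

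Third, the estimation would combine Weil's bound $|S(m,n;q)|\ll (m,n,q)^{1/2}q^{1/2+\eps}$ with direct bounds on the Bessel-type weights, partitioning the sums over $q$ (and the Voronoi dual variables) into three regimes. Each regime contributes one of the three summands $AB$, $(ABX)^{5/8}Z^{-1/2}$, or $(AB)^{1/2}XZ^{-3/4}$ inside the parentheses in~$\mathcal E$; the prefactor $(AB)^{1/2}XZ^{-1/2}$ encodes the $h$-truncation together with the overall normalization. The main obstacle will be this last step: obtaining all three terms simultaneously requires a careful and slightly different bookkeeping from~\cite{BBLR}, in particular avoiding an intermediate Cauchy--Schwarz step in the Kloosterman-heavy regime so as to preserve the full $(AB)^{1/2}$ saving rather than $AB$. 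This refinement is what ultimately yields the enlarged range $\vartheta_1+\vartheta_2<\tfrac12$ in Proposition~\ref{p4m}, and the deduction of that proposition from Proposition~\ref{qdpt} proceeds exactly as in~\cite{BBLR}.
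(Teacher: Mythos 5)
Your high-level framework is right: the proof follows Theorem~1.3 of~\cite{BBLR} (circle method $\delta$-detection of $am_1m_2-bn_1n_2=h$, Voronoi on the divisor sums, leading to averaged Kloosterman sums), and the actual deduction of Proposition~\ref{p4m} is indeed routine once the quadratic divisor estimate is in hand. But the mechanism you propose for the sharpening is not the one the paper uses, and in fact the technical tool you name would not get you there.

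The crucial step in treating the Kloosterman-sum stage is \emph{not} Weil's bound. The ranges here are far beyond what point-wise Weil gives; the whole strength of this line of argument (going back to Watt and to~\cite{BCR}) comes from Deshouillers--Iwaniec's large-sieve / spectral bound on sums of Kloosterman sums, i.e.\ \cite[Theorem~12]{DI}. What the paper actually does is revisit \cite[Lemma~3.2]{BBLR}, which encapsulates that Deshouillers--Iwaniec input as an estimate for
$\sum \alpha(h)\beta(c)\gamma_{r,s}(v,p)a_r b_s\, e(\pm hc\overline{rv}/(sp))$,
and shows that the two side-hypotheses in the original statement (roughly $X\gg Y^\eps$ and $(RS)^2\geq\max\{H^2C,\ SP/V\cdot Y^\eps\}$) can be removed. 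Dropping the first is handled by a direct Poisson estimate which already beats the target in the regime $X\ll Y^\eps$; dropping the second is done by going back to \cite[Theorem~12]{DI} and rewriting its bound so that the offending factor is dominated whenever the relevant inequalities fail. This yields a Watt-type inequality valid without restrictions on $d$ and $H$, and carrying an extra factor $(1+H/(AB)^{1/2})^{1/2}$ that \cite{BBLR} had discarded.

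That unconditional lemma is the whole point: it lets one apply the bound to the \emph{entire} sum over $d$ and $h$ at once, rather than splitting into small/large~$d$ (and into ranges of~$H$) and treating each piece differently as in~\cite{BBLR}. The improvement to $\vartheta_1+\vartheta_2<\tfrac12$ comes from this single uniform application together with the retained factor, not from re-arranging a Cauchy--Schwarz step or from sharper Bessel-weight estimates. As written, your proposal would stall at the third stage: with only Weil and a ``different bookkeeping'' you cannot recover even the original BBLR exponent, let alone the refined one.
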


As only few changes are needed to the proof of Theorem~1.3 of~\cite{BBLR}, we will not repeat the full argument here, but only indicate the changes. The main difference is that we slightly refine~\cite[Lemma~3.2]{BBLR} (which is proven in~\cite[Proposition~3]{BCR} and is essentially due to Watt~\cite[Proposition~4.1]{Watt}).

\begin{lemma}\label{Watt}
Let $H,C,R,S, V,P\geq1$, $\delta\leq1$ and let $X:=\sqrt{\frac {RSVP}{HC}}$.
Moreover, assume that $\alpha(y), \beta(y)$ and  $\gamma_{r,s}(x,y)$ (for any $r,s\in\Z$) are complex valued smooth functions, supported on the intervals $[1,H]$, $[1,C]$ and $[V,2V]\times[P,2P]$, respectively, such that
$\alpha^{(j)}(x),\beta^{(j)}(x)\ll_j (\delta x)^{-j}$ and $\frac{\partial^{i+j}}{\partial x^i\partial y^j}\gamma_{r,s}(x,y)\ll_{i,j} x^{-i}y^{-j}$ for any $i,j\geq0$. Assume $a_r,b_s$ are sequences of complex numbers supported on $[R,2R]$, $[S,2S]$, respectively, and such that $a_r\ll R^\varepsilon$, $b_s\ll S^\varepsilon$ for any $\eps>0$. Then 
\begin{align}\label{wbou}
\Sigma&:=\sum_{\substack{h,c,r,s,v,p\\(rv,sp)=1}}\alpha(h)\beta(c)\gamma_{r,s}(v,p)a_{r}b_{s} \textrm{e}\bigg(\pm\frac{hc\overline {rv}}{sp}\bigg)\notag\\
& \ll \delta^{-10} HC  R(V+SX)\Big(1+\frac{HC}{RS}\Big)^\frac12\Big(1+\frac{P}{RV} \Big)^\frac12\Big(1+\frac{H^2CPX^2}{R^4S^3V}\Big)^\frac14 (HCRSVP)^{\varepsilon}.
\end{align}
\end{lemma}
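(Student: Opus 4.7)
\medskip
\noindent\textbf{Proof plan for Lemma~\ref{Watt}.} The strategy is to follow Watt's argument as refined in~\cite{BCR}, with more careful bookkeeping at the spectral step to produce the sharper bound stated. The overall route is: reduce to Kloosterman sums via Poisson summation, apply the Kuznetsov formula to pass to the spectral side, and conclude with the Deshouillers--Iwaniec large sieve for Fourier coefficients of automorphic forms.

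First I would separate the variables $v,p$ in $\gamma_{r,s}(v,p)$ via a double Mellin inversion, at the cost of negligibly many test-function factors; the derivative bounds hypothesized for $\gamma_{r,s}$ then translate into rapid decay in the Mellin variables, while the smoothness assumption $\alpha^{(j)}(x)\ll(\delta x)^{-j}$ contributes a factor $\delta^{-O(1)}$ after integration, accounting for the $\delta^{-10}$ in~\eqref{wbou}. Next, I would apply Poisson summation in the $h$ variable to the exponential $\textrm{e}(\pm hc\overline{rv}/(sp))$. Since $(rv,sp)=1$, the dual sum produces an honest Kloosterman sum $S(\pm c\overline{rv}, n; sp)$ weighted by the Fourier transform of $\alpha$ at scale $H$. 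One then rearranges the sum so that the Kloosterman modulus $sp$ is the summation variable, grouping the remaining parameters $c, r, v$, and the dual variable $n$ as coefficients.

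The heart of the argument is the Kuznetsov trace formula: I would feed the resulting sum $\sum_{sp}S(\ldots;sp)\,\Phi(\ldots)$ into Kuznetsov, obtaining a spectral decomposition into a discrete contribution from Maass forms, a holomorphic contribution from cusp forms, and a continuous contribution from Eisenstein series. The integral transforms on the spectral side are controlled by stationary phase against the smooth weights, which restricts the effective range of the spectral parameter. On this range one invokes the spectral large sieve of Deshouillers--Iwaniec, which bounds the weighted linear forms $\sum_r a_r \lambda_j(r)$ (and similarly for $b_s$) by the combination of arithmetic factors appearing as the first three factors in~\eqref{wbou}. The last factor $(1+H^2CPX^2/(R^4S^3V))^{1/4}$ arises from optimising the Cauchy--Schwarz split between the $r$- and $s$-coefficients against the spectral parameter.

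The main obstacle is this final optimisation: to improve on~\cite[Lemma~3.2]{BBLR}, one has to exploit that the $P$ summation interacts with the Kloosterman modulus $sp$ rather than being fixed, which changes the balance of parameters in the large sieve. Concretely, one must verify that the stationary-phase analysis of the Bessel transforms permits grouping $P$ with $S$ in the modulus instead of $V$ with $R$, yielding the factor $(V+SX)$ in place of a weaker bound. Once this balancing is carried out carefully, summing the spectral, holomorphic, and Eisenstein contributions separately and taking the maximum produces the claimed estimate, with all remaining dependencies absorbed into $(HCRSVP)^\varepsilon$.
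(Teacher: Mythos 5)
Your plan misidentifies what this lemma actually accomplishes and, as a result, proposes a far more laborious route than the one the paper takes. The bound in~\eqref{wbou} is not sharper than the one in~\cite[Lemma~3.2]{BBLR} --- in fact the $\delta$-dependence here ($\delta^{-10}$) is slightly \emph{worse} than the $\delta^{-7/2}$ in~\cite{BBLR}. The sole improvement is the removal of the auxiliary hypotheses under which~\cite[Lemma~3.2]{BBLR} was stated, namely $X\gg(RSVP)^\varepsilon$ and $(RS)^2\geq\max\{H^2C,\frac{SP}{V}(RSVP)^\varepsilon\}$. Once you see this, the proof becomes a short two-case argument rather than a rederivation of Watt's spectral machinery.

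Concretely, the paper's proof does not touch the Kuznetsov formula or the spectral large sieve at all; it is entirely a matter of handling the complementary ranges by elementary bounds. When $X\ll(HCRSVP)^\varepsilon$, one applies Poisson summation in $h$ and $c$ alone, which gives $\Sigma\ll Y^\varepsilon\delta^{-2}(HCRV+X^2HCPS)$, and then checks by direct comparison of factors that this already beats~\eqref{wbou}. When the second hypothesis fails, one does not need the Watt/BCR refinement at all: the original estimate of Deshouillers--Iwaniec, \cite[Theorem~12]{DI}, rewritten in the present notation, yields a bound dominated by~\eqref{wbou} whenever the quantity $B:=\frac{RS/H}{(1+CH/RS)(1+P/RV)^2}$ is $\leq 1$, which is exactly what happens in the failing range $(RS)^2\leq H^2C$ or $(RS)^2\leq SP/V$.

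Your plan --- re-running the Kloosterman-to-Kuznetsov-to-spectral-large-sieve pipeline with a different balance of $r,v,s,p$ --- is not what the authors do, and the particular claim that one should ``group $P$ with $S$ in the modulus instead of $V$ with $R$'' is not substantiated by any stationary-phase analysis you present; as written it is a heuristic, not a proof step. More importantly, it presupposes the bound is being strengthened, when in fact the achievement is hypothesis-removal at the cost of a weaker $\delta$-exponent. If you want to prove this lemma, you should (i) state the exact form of~\cite[Lemma~3.2]{BBLR} with its side conditions, (ii) handle the case of small $X$ by double Poisson summation, and (iii) handle the case where the $(RS)^2$ condition fails by quoting~\cite[Theorem~12]{DI} and verifying the elementary inequality $B\leq1$ in that range. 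No new spectral work is needed.
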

\begin{proof}
Lemma~3.2 of~\cite{BBLR} gives the above bound (with $\delta^{-7/2}$ in place of $\delta^{-10}$) under the hypotheses
\begin{equation}\label{cowa}
X\gg (RSVP)^{\varepsilon},\qquad (RS)^2\geq\max\Big\{H^2C,\frac{SP}{V}(RSVP)^{\varepsilon}\Big\}.
\end{equation}
 Let's now show that these hypotheses can be dropped. We let $Y:=HCRSVP$. Applying Poisson summation formula in $h$ and $c$ one easily sees that 
$$\Sigma\ll Y^{\eps}\delta^{-2}\cdot (HCRV+(PS)^2RV)=Y^{\eps}\delta^{-2}(HCRV+X^2 HCPS).
$$
If $X\ll Y^\eps$ this is stronger than~\eqref{wbou} since $HC  R\cdot V=HCRV$ and 
$HCR\cdot SX\cdot (\frac{H C}{RS})^{1/2}(\frac{P}{RV})^{1/2}=HCP S\cdot  R^{1/2}$. Thus, the assumption $X\gg Y^\eps$ can be dropped.
Moreover, with the above notation~\cite[Theorem~12]{DI} can be rewritten as 
\begin{align*}
\Sigma&\ll \delta^{-10}Y^{\eps}\sqrt{C H R S} \Big(\sqrt{   P^2 S \sqrt{R (C H + R S)} V} + \sqrt{   C H R V^2/S} + \sqrt{   P S (C H + R S) (P + R V)}\Big)\\
&=\delta^{-10}Y^{\eps} H CR  \bigg(V+SX \Big(1+\frac{HC}{RS}\Big)^\frac12\Big(1+\frac{P}{RV} \Big)^\frac12\Big(1+\Big(B\frac{H^2CPX^2}{R^{4}S^{3}V}\Big)^{1/4}\Big)\bigg),
   \end{align*}
with 
$$
B:=\frac{RS/H}{(1+C H / R S)  (1+P / R V)^{2}}.
$$
Thus, this bound is clearly at least as strong as~\eqref{wbou} if $B\leq 1$. If $(RS)^2\leq H^2C$ then $B\leq \frac{(RS)^{2}}{CH^2}\leq1$, whereas if $(RS)^2\leq \frac{SP}{V}$ then $B\leq \frac{RS(RV)}{HP}\leq  \frac{1}{H}\leq1$. Thus, the second assumption in~\eqref{cowa} can also be dropped.
\end{proof}

This lemma allows for a simplification of the computations in~\cite[pp.~16--20]{BBLR} since we can now apply the above bound to the full sums. As in~\cite{BBLR} we let $W_0,\dots W_4$ be smooth function supported in $[1,2]$ and such that $W_i^{(j)}\ll_j (ABMN)^{\eps}$ for any fixed $j\geq0$. Also, let  $\boldsymbol \alpha_a,\boldsymbol\beta_b$ be sequences in $\C$ supported on $[A,2A]$ and $[B,2B]$, and such that $\boldsymbol\alpha_a\ll A^\eps, \boldsymbol\beta_b\ll B^\eps$.  Also, let $M_1,M_2,N_1,N_2,H\geq1$ and let $M=M_1M_2$, $N=N_1N_2$. Then, under the assumptions
\begin{align*}
M_1\leq M_2(AM)^\varepsilon,\quad N_1\leq N_2(AM)^\varepsilon,\quad BN_1\leq AM_1,\quad AM\asymp BN
\end{align*}
applying Lemma~\ref{Watt} and following the same simple computations of~\cite[p.~20]{BBLR} we obtain for $x\asymp \frac {dN_2}{AM_1}$,
\begin{align*}
Z_{\pm,d}(x)&:=\sum_{\substack{a,b,m_1,n_1,h\\(am_1,bn_1)=d}}\sum_{0<|l|\leq \frac{AM}{dM_2N_2}(AM)^\eps}\boldsymbol \alpha_a\boldsymbol\beta_bW_0\Big(\frac {dh}H\Big)W_1\Big(\frac{m_1}{M_1}\Big)W_3\Big(\frac{n_1}{N_1}\Big)\\
&\qquad\qquad W_2\Big(\frac{bn_1x}{dM_2}\Big)W_4\Big(\frac{am_1x}{dN_2}\Big)  \textrm{e}\bigg(\mp lh\frac{\overline{am_1/d}}{bn_1/d}\bigg)e(lx)\\
&\ll (HAM)^{\eps}\frac{A^2BH^\frac12}{d^2}  \Big(\frac{M_1N_1}{M_2N_2}\Big)^{\frac12 } (BM)^\frac12 \Big(1+\frac{N_1^2H}{A^3B^2}\Big)^\frac14\Big(1+\frac{H}{(AB)^\frac12}\Big)^\frac12,
\end{align*}
where with respect to~\cite[eq.~(18)]{BBLR} we kept the extra factor of $(1+\frac{H}{(AB)^{1/2}})^{1/2}$ (coming from the factor $(1+\frac{HC}{RS})^{1/2}$ in~\eqref{wbou}) but we now have no hypothesis on $d$ and $H$.

Thus, as in~\cite[p.~17]{BBLR} (but with no need to treat separately the sums of large and small $d$s) we deduce
\begin{align*}
\sum_{d\leq 2H} \int_{x\asymp \frac{dN_2}{AM_1}}|Z_{\pm,d}(x)|\,dx  &\ll (AMH)^{\frac12 +\eps}\Big(AB + (ABH)^\frac14(ABMN)^\frac18\Big)\bigg(1+\frac{H}{(AB)^\frac12}\bigg)^{\frac12}\\
&\ll  (AMH)^{\eps}E
\end{align*}
with
\begin{align*}
&E:= (ABMNH^2)^{\frac14}\Big(AB + (ABH)^\frac14(ABMN)^\frac18+(AB)^{\frac34}H^{\frac12}+H^{\frac 34}(ABMN)^\frac18\Big).
\end{align*}
The rest of the proof then follows unchanged. In particular, Proposition~3.1 of~\cite{BBLR} now holds with $\mathcal E\ll(E+H^2)(ABMNH)^\eps$ (with no conditions on $H$) and one then deduces  Theorem~4.1 and Corollary~4.1  (again with no need to split the sums) with the stronger bounds
$$
\mathcal E\ll T^\eps(ABX^2H^2)^{\frac14}\Big(AB + (ABH)^\frac14(ABX^2)^\frac18+(AB)^{\frac34}H^{\frac12}+H^{\frac 34}(ABX^2)^\frac18\Big)+T^\eps H^2
$$
and 
\begin{align*}
\mathcal E
&\ll T^\eps(AB)^\frac12 XZ^{-\frac12}\Big(AB + (ABX)^{\frac58}Z^{-\frac12}+(AB)^{\frac12}XZ^{-\frac 34}\Big).
\end{align*}
respectively. Proposition~\ref{qdpt} then follows.

\appendix
\section{}\label{Appendix}
In this appendix we prove that the kernel $F_k$ defined in \cite[Theorem 5.2]{HughesThesis} equals $W_U^k$ in~\eqref{w1ld}. More precisely, we show that $\hat F_k(y) = \hat W_U^{k}(y)$ for $y\in(0,1)$, the other ranges being analogous. By direct computation, one can easily see that 
\begin{equation}\begin{split}\label{kernelHughes}
\hat F_k(y) =    
&\sum_{i=0}^{k-1}\sum_{n=0}^{k-i-1}
\frac{(-1)^{k}b_{2n}}{(2k-2n-1)!} \binom{2k-2n-1}{2i}y^{2i}  \\
&+\sum_{i=0}^{k-1}\sum_{n=1}^{k-i}
\frac{(-1)^{k-1}b_{2n-1}}{(2k-2n)!} \binom{2k-2n}{2i}y^{2i}
+\sum_{i=1}^{k} \frac{(-1)^{k}c_{2k-2i}}{2(2i-1)!}y^{2i-1}
\end{split}\end{equation}
for $y\in(0,1)$, with $b_r$ and $c_{2r}$ as defined in \cite{HughesThesis}, Equations (5.20) and (5.21) respectively.
In the same range for $y$, by using the properties in \cite[Conjecture 2.1]{4.}, we get
\begin{equation}\begin{split}\label{kerneltesi}
\hat W_U^k(y)
= -k - \frac{k}{2}\left( (k+1)\sum_{j=1}^k (-1)^j d_{j,k}\frac{y^{2j-1}}{2j-1} + (k-1)\sum_{j=1}^{k-1}(-1)^jd_{j,k-1}\frac{y^{2j-1}}{2j-1}\right)
\end{split}\end{equation}
with 
$ d_{j,k} = \frac{1}{j}\binom{k-1}{j-1}\binom{k+j}{j-1}\footnote{Note the slight change of notation for the coefficients $d_{j,k}$, which are denoted $c_{j,k}$ in \cite{FazzariThesis}.}.$
Now we prove that the coefficients of the polynomial~\eqref{kerneltesi} equal those of~\eqref{kernelHughes}. 
Let's start with the coefficient of $y^{2k-1}$; the claim is
\begin{equation}\label{coeff2k-1}  
\frac{2b_{0,k}}{(2k-2)!} = (k+1)  \binom{2k}{k-1}.\end{equation}
Note that, since 
$\binom{k}{1-n} = 0$ for $n\geq 2$, we have
\begin{equation}\begin{split}\notag
b_{0,k} =  -\frac{(2k-1)!}{(k-1)!}\frac{(2k-1)!}{(k-1)!} + \frac{(2k-2)!}{(k-1)!}\frac{(2k)!}{(k-1)!},
\end{split}\end{equation}
then~\eqref{coeff2k-1} follows by direct computation. 
Next, we show that the constant terms are equal, that amounts to proving that
\begin{equation}\label{constantterm2} 
(-1)^k\sum_{i=0}^{2k-1}\frac{(-1)^ib_{i}}{(2k-i-1)!}  = -k .
\end{equation}
By definition of $b_i$, the left-hand side above can be written as
\begin{equation}\notag
(-1)^k\sum_{n=0}^k \frac{(2k-n-1)!}{k!(k-1)!}\binom{k}{n} \sum_{i=0}^{2k-1}\frac{(-1)^i(2k+n-i-1)!}{(2k-i-1)!}\binom{k}{i+1-n}(2n-i-1),
\end{equation}
therefore~\eqref{constantterm2} easily follows from
\begin{equation}\begin{split}\notag\label{22june.1}
 \sum_{i=0}^{2k-1}\frac{(-1)^i(2k+n-i-1)!}{(2k-i-1)!}\binom{k}{i+1-n}(2n-i-1) 
 = \begin{cases}
 0 &\text{if }n<k-1 \\
(-1)^kk! &\text{if }n=k-1 \\
(-1)^{k+1}k(k+1)! &\text{if }n=k.
\end{cases}
\end{split}\end{equation}
To prove the above, we notice that it is equivalent to showing
\begin{equation}\label{0001}  S:=\sum_{j=0}^{k}\frac{(-1)^j(2k-j)!}{(2k-j-n)!}\frac{n-j}{j!(k-j)!} 
 = \begin{cases}
 0 &\text{if }n<k-1 \\
1 &\text{if }n=k-1 \\
k(k+1) &\text{if }n=k,
\end{cases}\end{equation}
where we used that $\binom{k}{i+1-n}=0$ unless $0\leq i+1-n\leq k$ and made the change of variable $j:=i+1-n$.
The left hand side above can be written as
$$ S=nS_1-S_2 $$
with $$ S_1= \sum_{j=0}^{k}\frac{(-1)^j(2k-j)!}{(2k-j-n)!}\frac{1}{j!(k-j)!},
\quad  S_2 = \sum_{j=0}^{k}\frac{(-1)^j(2k-j)!}{(2k-j-n)!}\frac{j}{j!(k-j)!}. $$ 
Then~\eqref{0001} follows from the Gauss summation theorem, which yields
\begin{equation}\notag
S_1 
= \frac{(2k)!}{k!(2k-n)!}\sum_{j=0}^k\frac{(-k)_j(n-2k)_j}{(2k)_jj!}
= \frac{(2k)!}{k!(2k-n)!} \pFq{2}{1}{-k,n-2k}{-2k}{1}
= \begin{cases} 0 &\text{if }n<k \\ 1 &\text{if }n=k \end{cases}
\end{equation}
and 
\begin{equation}\notag
S_2 
=-\frac{(2k)!}{2k!(2k-n-1)!}\pFq{2}{1}{1-k,n-2k+1}{1-2k}{1}
= \begin{cases} 0 &\text{if }n<k-1 \\ 
-1  &\text{if }n=k-1  \\  
-k^2 &\text{if }n=k. \end{cases}
\end{equation}

Finally we need to show that the coefficients of $y^{\ell}$ with $0<\ell<2k-1$ coincide. We start with the case of $\ell=2i$ even. In this case the problem amounts to proving that the coefficient of $y^{2i}$ in $\hat F_k(y)$ is zero. Equivalently, by the definition of $b_r$ and proceeding as before, we need to show that
\begin{equation}\label{27june.1}
\sum_{n=0}^k\frac{(-1)^{n-1}(2k-n-1)!}{k!(k-1)!}\binom{k}{n} 
\sum_{m=0}^{k} \frac{(-1)^{m}(2k-m)!}{(2k-m-n)!} \binom{2k-m-n}{2i}
\binom{k}{m}(n-m)=0.
\end{equation}
Let's focus on the inner sum, which we decompose as
\begin{equation}\begin{split}\notag 
\sum_{m=0}^{k} \frac{(-1)^{m}(2k-m)!}{(2k-m-n)!} \binom{2k-m-n}{2i}
\binom{k}{m}(n-m) = nS_1 - S_2,
\end{split}\end{equation}
where, again by the Gauss summation theorem,
$$ S_1 
=  \sum_{m=0}^{k} \frac{(-1)^{m}(2k-m)!}{(2k-m-n)!} \binom{2k-m-n}{2i}
\binom{k}{m}
=\frac{(2k)!}{(2k-n)!}\binom{2k-n}{2i}\frac{(-n-2i)_k}{(-2k)_k}  $$
and
$$ S_2 
= \sum_{m=0}^{k} \frac{(-1)^{m}(2k-m)!}{(2k-m-n)!} \binom{2k-m-n}{2i}
\binom{k}{m}m
= -\frac{(2k)!(2k-n-2i)}{2(2k-n)!}\binom{2k-n}{2i}\frac{(-n-2i)_{k-1}}{(1-2k)_{k-1}}. $$
Therefore the left hand-side of~\eqref{27june.1} can be written as 
\begin{equation}\begin{split}\label{20july.1}
&\frac{k}{(-2k)_k}\binom{2k}{k}\sum_{n=0}^k \frac{(-1)^{n-1}}{2k-n}\binom{k}{n}\binom{2k-n}{2i}n(-n-2i)_k\\
&\hspace{1cm}-\frac{k}{(-2k)_k}\binom{2k}{k}\sum_{n=0}^k \frac{(-1)^{n-1}}{2k-n}\binom{k}{n}\binom{2k-n}{2i}(2k-n-2i)k(-n-2i)_{k-1}.
\end{split}\end{equation}
Hence,~\eqref{27june.1} follows from the two identities
\begin{equation}\begin{split}\notag
&\text{(i)}\quad\;\sum_{n=0}^k \frac{(-1)^nn}{2k-n}\binom{k}{n}\binom{2k-n}{2i}(-n-2i)_k=0\\
&\text{(ii)}\quad\sum_{n=0}^k \frac{(-1)^n(2k-n-2i)}{2k-n}\binom{k}{n}\binom{2k-n}{2i}(-n-2i)_{k-1}=0
\end{split}\end{equation}
for all $0<i<k$, which can be proven by direct computation. For example in the case $k\leq 2i\leq 2k$ (the other is analogous), (i) and (ii) are equivalent respectively to 
$$\pFq{3}{2}{2i+2,2i-2k+1,1-k}{2-2k,2i-k+2}{1}=0 \quad \text{and}\quad
\pFq{3}{2}{2i+1,2i-2k+1,-k}{1-2k,2i-k+2}{1}=0,$$
 for $k\leq 2i\leq 2k$, and these can be shown by Dixon and Watson's theorems (see Equation (1) p. 16 and Equation (1) p. 13 from \cite{Bailey}).

Now, we consider the case $\ell=2i-1$ odd, with $0<i<k$. We want to show that
\begin{equation}\label{oddcoeff} 
\frac{(-1)^{k}c_{2k-2i}}{2(2i-1)!}
=-\frac{(-1)^{k}}{(2i-1)!} \sum_{m=0}^{2k-2i}\frac{(-1)^mb_m}{(2k-2i-m)!}.
\end{equation}
After our now-familiar manipulations, we can rewrite the left-hand side above as $I_1+I_2$, with 
\begin{equation}\begin{split}\notag
I_1 &=\frac{k}{k!}\frac{(-1)^{k}}{(2i-1)!} \sum_{n=0}^k \frac{n(-1)^n(2k-n-1)!}{n!(k-n)!}(n+2i-1)!\binom{k}{n+2i-k-1}\\
I_2&=\frac{k}{k!}\frac{(-1)^{k}}{(2i-1)!} \sum_{n=0}^k \frac{k(-1)^n(2k-n-1)!}{n!(k-n)!} (n+2i-1)!\binom{k}{n+2i-k}.
\end{split}\end{equation}
Equation~\eqref{oddcoeff} then reads
\begin{equation}\begin{split}\notag
\frac{i(-1)^{k+i+1}}{k!}\frac{(k-i)!}{(k+i-1)!}&\frac{1}{\binom{2i-2}{i-1}} \sum_{n=0}^k\frac{(-1)^n(2k-n-1)!(n+2i-1)!}{n!(k-n)!}\\
&\times\bigg(n\binom{k}{n+2i-k-1}+k\binom{k}{n+2i-k}\bigg)=1,
\end{split}\end{equation}
or, for example for $k\leq2i\leq2k$, equivalently
\begin{equation}\begin{split}\notag
\frac{i(-1)^{k+i+1}}{k!}\frac{(k-i)!}{(k+i-1)!}&\frac{1}{\binom{2i-2}{i-1}} 
\bigg\{-(2i)!\frac{(2k-2)!}{(k-1)!}\binom{k}{2i-k}\pFq{3}{2}{1+2i,2i-2k,1-k}{2-2k,1+2i-k}{1}\\
&\hspace{1.5cm}+(2i-1)!\frac{(2k-1)!}{(k-1)!}\binom{k}{2i-k}\pFq{3}{2}{2i,2i-2k,-k}{1-2k,1+2i-k}{1}\bigg\}=1.
\end{split}\end{equation}
Evaluating the remaining hypergeometric functions by Dixon's and Watson's theorem again, the above identity is proven.

\end{document}